\newtheorem{theorem}{Theorem}[section]
\newtheorem{lemma}[theorem]{Lemma}
\newtheorem{proposition}[theorem]{Proposition}
\newtheorem{corollary}[theorem]{Corollary}
\newtheorem{claim}{Claim}
\newtheorem*{thm:main1}{Theorem~\ref{thm:main1}}
\newtheorem*{thm:main2}{Theorem~\ref{thm:main2}}
 \newenvironment{clproof}{\begin{list}{}{%
               \setlength{\leftmargin}{5mm}%
               } \item {\it Proof.} }{\hfill$\lozenge$\end{list}\medskip}
\newcommand\abs[1]{\lvert #1\rvert}
\newcommand{\td}{\operatorname{td}}
\newcommand{\clos}{\operatorname{clos}}
\newcommand{\dist}{\operatorname{dist}}
\newcommand{\wcol}{\operatorname{wcol}}
\newcommand{\WReach}{\operatorname{WReach}}
\newcommand{\cC}{\mathcal{C}}
\newcommand{\cF}{\mathcal{F}}
\newcommand{\cI}{\mathcal{I}}
\newcommand{\cA}{\mathcal{A}}
\newcommand{\cB}{\mathcal{B}}
\newcommand{\cH}{\mathcal{H}}
\newcommand{\cT}{\mathcal{T}}
\begin{document}
\title{Rainbow independent sets on dense graph classes}

\author{Jinha Kim\thanks{All authors are supported by Institute~for~Basic~Science (IBS-R029-C1).}}
\author{Minki Kim}

\affil[1]{\small Discrete Mathematics Group, Institute~for~Basic~Science~(IBS), Daejeon,~South~Korea.}

\author[1,2]{O-joung Kwon\thanks{O. Kwon is supported by the National Research Foundation of Korea (NRF) grant funded by the Ministry of Education (No. NRF-2018R1D1A1B07050294).}}
\affil[2]{\small Department of Mathematics, Incheon~National~University, Incheon,~South~Korea.}

\date\today
\maketitle
  \setcounter{footnote}{1}%
  \footnotetext{E-mail addresses: \texttt{jinhakim@ibs.re.kr} (J. Kim), \texttt{minkikim@ibs.re.kr} (M. Kim), \texttt{ojoungkwon@gmail.com} (Kwon) 
  }

\begin{abstract}
	Given a family $\mathcal{I}$ of independent sets in a graph, a rainbow independent set is an independent set $I$ such that there is an injection $\phi\colon I\to \mathcal{I}$ where for each $v\in I$, $v$ is contained in $\phi(v)$.
	Aharoni, Briggs, J. Kim, and M. Kim [Rainbow independent sets in certain classes of graphs. arXiv:1909.13143] determined for various graph classes $\mathcal{C}$ whether $\mathcal{C}$ satisfies a property that for every $n$, there exists $N=N(\mathcal{C},n)$ such that every family of $N$ independent sets of size $n$ in a graph in $\mathcal{C}$ contains a rainbow independent set of size $n$. 
    In this paper, we add two dense graph classes satisfying this property, namely, the class of graphs of bounded neighborhood diversity and the class of $r$-powers of graphs in a bounded expansion class. 
	\end{abstract}

\section{Introduction}

Given a family $\mathcal{F}$ of subsets of a set $A$, a subset $B$ of $A$ is an \emph{$\mathcal{F}$-rainbow set} if there is an injection $\phi\colon B\to \mathcal{F}$ such that for each $b\in B$, $b$ is contained in $\phi(b)$. 
Given a family $\mathcal{M}$ of matchings in a graph $G$, a \emph{rainbow matching} is a matching in $G$ that is an $\mathcal{M}$-rainbow set.
Rainbow matchings in bipartite graphs have attracted attention because of their connection to transversals of Latin squares: see, for example, \cite[Section $2$]{RonE2009}.
 Drisko~\cite{Drisko1998} showed that every family of $2n-1$ matchings of size $n$ in a bipartite graph, in which each side has size $n$, contains a rainbow matching of size $n$.
 Later, Aharoni and Berger~\cite{RonE2009} showed that this is true for all bipartite graphs. 
For general graphs, Aharoni, Berger, Chudnovsky, Howard and Seymour~\cite{RonEMDP2019} showed that every family of $3n-2$ matchings of size $n$ contains a rainbow matching of size $n$. They conjectured that if $n$ is even, then $2n$ matchings of size $n$ have a rainbow matching of size $n$, and if $n$ is odd, then $2n-1$ matchings have a rainbow matching of size $n$.
Recently, Aharoni, Briggs, J.~Kim and M.~Kim~\cite{AharoniBKK2021} proved that $3n-3$ matchings are sufficient for $n \geq 3$.

In this paper, we consider an analogue of rainbow matchings for independent sets. 
Given a family $\mathcal{I}$ of independent sets in a graph, a \emph{rainbow independent set} is an independent set that is an $\mathcal{I}$-rainbow set.
For the line graph $H$ of a graph $G$, rainbow matchings in $G$ correspond to rainbow independent sets in $H$.
Thus, the result of Aharoni and Berger~\cite{RonE2009} implies that every family of $2n-1$ independent sets of size $n$ in the line graph of a bipartite graph contains a rainbow independent set of size $n$.
Motivated by this fact, Aharoni, Briggs, J. Kim, and M. Kim~\cite{ABKK2019} and Kim and Lew~\cite{KimL2019} considered the same type of problems for rainbow independent sets in various graph classes. They observed that, contrary to the matching case, such a bound does not exist for all graphs. A simple example is the complete multipartite graph where each part has size $n$ and the number of parts is $k$. Clearly, this example has $k$ distinct independent sets of size $n$ which has no rainbow independent set. Thus, for the class of all complete multipartite graphs, there is no integer $N$ satisfying that every family of $N$ independent sets of size $n$ has a rainbow independent set of size $n$.

For a graph $G$ and a positive integer $n$, let $f_G(n)$ be the minimum $k$ such that every family of $k$ independent sets of size $n$ has a rainbow independent set of size $n$.
For a class $\mathcal{C}$ of graphs and a positive integer $n$, let $f_{\mathcal{C}} (n)=\sup \{ f_G(n) : G\in \mathcal{C}\}$.
We say a class $\mathcal{C}$ of graphs has the \emph{rainbow property} (for independent sets) if for every positive integer $n$, $f_{\mathcal{C}}(n)<\infty$.
As dicussed, there are graph classes that do not have the rainbow property: for example, the class of complete multipartite graphs.
It was shown in \cite{ABKK2019} that the class of $H$-induced-subgraph-free graphs has the rainbow property if and only if $H$ is either a complete graph or the graph obtained from a complete graph by removing one edge.

For well-known sparse graph classes $\mathcal{C}$, such as the class of planar graphs, we may observe that $\cC$ satisfies the rainbow property.
Ne\v{s}et\v{r}il and Ossona de Mendez~\cite{NO2008a} 
introduced the notion of \emph{bounded expansion classes of graphs}, generalizing the classes of graphs with excluding minors and the classes of graphs of bounded degree. 
The same authors introduced even more general sparse graph classes called \emph{nowhere dense classes of graphs}~\cite{NO2011}, which further include the classes with locally excluding a minor. 
We refer to \cite{NO2012} for more extensive study on sparse graph classes.
As an other extension, a bounded expansion class has bounded degeneracy, and classes of bounded degeneracy are incomparable with nowhere dense classes.
It is not difficult to see that every nowhere dense class or every class of bounded degeneracy has the rainbow property because they are $K_t$-induced-subgraph-free for some $t$ depending only on the class.

What about well-behaved dense graph classes? 
We can observe that the class of graphs that can be partitioned into at most $t$ pairwise twins (called graphs of {\em neighborhood diversity} at most $t$~\cite{Lampis2012}) has the rainbow property, where two vertices $v$ and $w$ in a graph $G$ are \emph{twins} if $v$ and $w$ have the same neighborhood in $G$ outside $\{v,w\}$.
In general, for a class $\mathcal{C}$ having the rainbow property, the class of all graphs obtained from a graph in $\mathcal{C}$ by replacing a vertex with a set of pairwise adjacent twins has the rainbow property. Thus, the class of graphs of neighborhood diversity at most $t$ has the rainbow property, because those graphs can be obtained from $K_{t+1}$-induced-subgraph-free graphs by replacing a vertex with a set of pairwise adjacent twins.

Another typical example of dense graph classes is the class of graphs of bounded \emph{clique-width}~\cite{CO2000} (equivalently, \emph{rank-width}~\cite{Oum2004}).
Briefly speaking, a graph has bounded clique-width if it can be decomposed into a tree-like structure where each vertex partition $(A, B)$ represented by an edge of the tree-structure has the property that $A$ has a bounded number of neighborhood types to $B$ and vice versa.
For example, graphs of bounded neighborhood diversity have small clique-width.
On the other hand, since complete multipartite graphs have clique-width $2$, the class of graphs of bounded clique-width (even restricted to the class of graphs with bounded linear clique-width, shrub-depth~\cite{GHNOO2017}, or modular-width~\cite{GajarskyLO2013}) does not have the rainbow property.

We are interested in finding new dense graph classes having the rainbow property.
For a graph $G$ and a positive integer $r$, the \emph{$r$-power} $G^r$  of $G$ is the graph obtained from $G$ by adding an edge between two vertices of distance at most $r$ in $G$.
Our main result is the following:
\begin{theorem}\label{thm:main2}
Let $\mathcal{C}$ be a bounded expansion class and $r$ be a positive integer. The class $\mathcal{D}=\{G^r : G\in \mathcal{C} \}$ has the rainbow property.
\end{theorem}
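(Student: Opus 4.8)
The plan is to run the whole argument inside $G$ (which is sparse) rather than inside the dense graph $G^r$, using the weak $r$-coloring number as the quantitative handle. Since $\mathcal{C}$ has bounded expansion, there is a constant $c=c(\mathcal{C},r)$ with $\wcol_r(G)\le c$ for every $G\in\mathcal{C}$; I would fix a vertex ordering $L$ of $G$ witnessing this, and for each vertex $v$ write $\lab(v)=\WReach_r[G,L,v]$ for the set of vertices weakly $r$-reachable from $v$, so that $\abs{\lab(v)}\le c$. The key lemma to establish first is that adjacency in $G^r$ forces the labels to meet: if $\dist_G(u,v)\le r$, then any $u$--$v$ path of length at most $r$ has an $L$-minimum vertex $w$, and $w$ lies in $\lab(u)\cap\lab(v)$. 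Equivalently, \emph{if $\lab(u)\cap\lab(v)=\emptyset$ then $u$ and $v$ are non-adjacent in $G^r$}. Consequently, to produce a rainbow independent set it suffices to find, in a family $I_1,\dots,I_N$ of $G^r$-independent sets of size $n$, indices $i_1,\dots,i_n$ and vertices $v_j\in I_{i_j}$ whose labels $\lab(v_1),\dots,\lab(v_n)$ are pairwise disjoint.

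Next I would set up a clean dichotomy by a maximal-packing argument on the labels. Greedily build a maximal collection of pairs $(v,I_i)$ drawn from distinct sets $I_i$ whose labels are pairwise disjoint. If this collection ever reaches size $n$ we are done, so assume it has size at most $n-1$; let $J$ be the (at most $n-1$) sets it uses and let $Y$ be the union of the chosen labels, so $\abs{Y}\le c(n-1)$. By maximality, every vertex $v$ of every unused set $I_i$ ($i\notin J$) satisfies $\lab(v)\cap Y\ne\emptyset$; since $w\in\lab(v)$ forces $\dist_G(v,w)\le r$, this means $I_i$ is entirely contained in $B:=\bigcup_{y\in Y}B_r(y)$, a union of at most $c(n-1)$ balls of radius $r$ in $G$, where $B_r(y)=\{u:\dist_G(u,y)\le r\}$. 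Thus if $N$ is large, all but at most $n-1$ of the given independent sets are trapped inside such a bounded union of balls, and it remains to rule out that this configuration avoids a rainbow independent set for arbitrarily large $N$.

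The trapped case I would handle by induction on $r$. For the base case $r=1$ the graph $G^1=G$ has bounded degeneracy, hence bounded clique number, hence is $K_t$-induced-subgraph-free for some $t=t(\mathcal{C})$; so the rainbow property follows from the result of \cite{ABKK2019} quoted above. For the inductive step, the decisive observation is a layering estimate inside a single ball $B_r(y)$: writing $\ell(v)=\dist_G(v,y)$, any two vertices $u,v$ of a common $G^r$-independent set satisfy $\dist_G(u,v)>r$, so $\ell(u)+\ell(v)>r$; hence at most one vertex of each independent set lies in the inner region $\{\ell\le\lfloor r/2\rfloor\}$, and after discarding these few vertices the surviving parts live in an outer annulus. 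Contracting the inner balls $B_{\lfloor r/2\rfloor}(y)$ (one per centre in $Y$) yields a bounded-depth minor $G'$ of $G$, which still lies in a fixed bounded expansion class depending only on $\mathcal{C}$ and $r$; the trapped independent sets project to independent sets in $(G')^{r'}$ for a strictly smaller radius $r'<r$, with only a bounded number of vertices discarded per set. Applying the induction hypothesis to $G'$ and absorbing the discarded vertices (at the cost of decreasing $n$ by a bounded amount and inflating $N$) then produces the desired rainbow set.

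The main obstacle, and the step I would spend the most care on, is precisely this trapped/clustered case: turning the geometric statement that all independent sets lie in a bounded union of $r$-balls into an honest radius reduction. The delicate points are that contraction only decreases distances, so one must verify that $G^r$-independence is inherited by the contracted instance in the correct direction; that the bookkeeping of discarded ``deep'' vertices keeps the target size and the number of sets controlled; and above all that every constant produced ($c$, the number of balls, the new radius and class parameters) depends only on $\mathcal{C}$, $r$ and $n$ and never on $\abs{V(G)}$. I expect the closure and treedepth machinery ($\clos$, $\td$) to enter here, to bound the number of local types of labelled balls and thereby keep the coordination of the rainbow selection uniform across the (possibly huge) universe $B$.
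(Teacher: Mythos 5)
Your opening is sound: the weak-reachability lemma (adjacency in $G^r$ forces the $\WReach_r$-labels to intersect, so vertices with pairwise disjoint labels form a $G^r$-independent set) is correct, and the maximal-packing dichotomy correctly shows that either a rainbow independent set of size $n$ exists or all but at most $n-1$ of the given sets are trapped in a union of at most $c(n-1)$ balls of radius $r$. This localization is a legitimate alternative to the paper's first reduction. But the inductive step that is supposed to finish the trapped case breaks, and it breaks in exactly the direction you flagged. Contracting the inner balls $B_{\lfloor r/2\rfloor}(y)$ shrinks distances uncontrollably: two annulus vertices $u,v$ with $\dist_G(u,v)>r$ (hence lying in a common $G^r$-independent set) can both be at distance $\lfloor r/2\rfloor+1$ from the same centre $y$, so in $G'$ both are adjacent to the contracted vertex and $\dist_{G'}(u,v)\le 2\le r'$. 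Thus the trapped sets do \emph{not} project to independent sets of $(G')^{r'}$, and the induction hypothesis cannot even be invoked. The reverse direction fails as well: a set that is independent in $(G')^{r'}$ only has pairwise $G$-distance greater than $r'<r$, so it does not lift back to an independent set of $G^r$. On top of this, the discarding bookkeeping is not ``bounded'' in the needed sense: each set can lose one vertex per ball, i.e.\ up to $c(n-1)\ge n$ vertices, so entire independent sets can vanish into the inner regions and no residual target size survives.

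The paper resolves precisely this difficulty in the opposite way: instead of contracting (which shortens distances), it \emph{enlarges} the relevant vertex set via $r$-shortest path closures (Lemma~\ref{lem:excellent}), so that distances --- and hence $r$-power adjacency --- inside the chosen color classes are exactly preserved; combined with low tree-depth colorings (Theorem~\ref{thm:tdcoloring}), this reduces the problem to $r$-powers of graphs of bounded tree-depth. The real content is then Theorem~\ref{thm:treedepth}, proved by Proposition~\ref{prop:inductivenew}: a delicate induction on the set size $n$ and a depth parameter $p$, which classifies vertices by their distance vectors to the root path $v_1,\dots,v_q$ and runs a matching/K\H{o}nig case analysis over subtrees. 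Your proposal has no working substitute for this core step; the ``closure and treedepth machinery'' you invoke at the end is exactly the missing content, not a detail to be filled in.
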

See Section~\ref{sec:prelim} for the definition of bounded expansion class. Bounded expansion classes include classes of $H$-minor free graphs, classes of graphs of bounded degree, classes of graphs of bounded stack number, and classes of graphs of bounded queue number.
We refer to \cite{NO2012c} for more examples of bounded expansion classes. The class of $d$-powers of graphs in some sparse graph class has been discussed recently, see~\cite{KwonPS2020, Gajarsky2020, Nesetril2020, Brianski2021}.

A \emph{map graph} is a graph that can be obtained from a plane graph by making a vertex for each face, and adding an edge between two vertices, if the corresponding faces share a vertex. 
Note that map graphs may have large cliques. 
Chen, Grigni, and Papadimitriou~\cite{ChenGP02} observed that every map graph is an induced subgraph of the $2$-power of a planar graph.
So, Theorem~\ref{thm:main2} implies that the class of map graphs has the rainbow property.
Contrary to Theorem~\ref{thm:main2}, the class of all possible graph powers of graphs in a bounded expansion class may not have the rainbow property. Aharoni et al.~\cite{ABKK2019} proved that the class of all possible powers of cycles does not have the rainbow property.

To prove Theorem~\ref{thm:main2}, we follow a similar approach used by Kwon, Pilipczuk, and Siebertz~\cite{KwonPS2020} to show that such class $\mathcal{D}$ admits low rank-width colorings.
In their proof, they used the fact that the class of $r$-powers of graphs of bounded tree-depth has bounded rank-width.
However, this does not help to show Theorem~\ref{thm:main2}, because complete multipartite graphs have rank-width at most $1$ while they do not have the rainbow property.
Instead, we show that the class of $r$-powers of graphs of bounded tree-depth has the rainbow property. 

We remark that Theorem~\ref{thm:main2} cannot be extended to $r$-powers of graphs of bounded degeneracy, because every graph can be obtained as an induced subgraph of the $2$-power of its $1$-subdivision, where this $1$-subdivision has degeneracy at most $2$. We leave as an open problem the question whether or not the class of $r$-powers of graphs in a nowhere dense class has the rainbow property.

The paper is organized as follows.
In Section~\ref{sec:prelim}, we introduce the necessary notions and notation used in the paper.
In Section~\ref{sec:treedepth}, we show that the class of $r$-powers of graphs of bounded tree-depth has the rainbow property, and based on this result, in Section~\ref{sec:expansion}, we prove that the class of $r$-powers of graphs in a bounded expansion class has the rainbow property.

\section{Preliminaries}\label{sec:prelim}
All graphs in this paper are finite, undirected graphs without loops or parallel edges.
Let $G$ be a graph. We write $V(G)$ for the vertex set of $G$ and $E(G)$ for its edge set.  
For $S \subset V(G)$, we denote by $G-S$ the graph obtained from $G$ by removing vertices in $S$ and all incident edges, 
and denote by $G[S]$ the subgraph of $G$ induced by $S$.
A graph $H$ is an \emph{induced subgraph} of $G$ if $H=G[S]$ for some $S\subseteq V(G)$.

The \emph{length} of a path is the number of edges in the path.
The {\em{distance}} between vertices $u$ and $v$ in $G$, denoted $\dist_G(u,v)$, is the length of a shortest path between~$u$ and $v$ in $G$, or $\infty$ if no such path exists.
The {\em radius} of $G$ is the minimum positive integer $d$ such that there is a vertex $u$ where $\dist_G(u,v) \leq d$ for every $v \in V(G)$.
The \emph{$r$-power of $G$} is the graph~$G^r$ with vertex set $V(G)$, where there is an edge between two vertices $u$ and $v$ in $G^r$ if and only if $\dist_G(u,v) \leq r$.
For a class $\mathcal{C}$ of graphs, we define $\cC^r:=\{G^r: G\in \cC\}$.

A set $I$ of vertices in $G$ is an \emph{independent set} if no two vertices in $I$ are adjacent.
A set $K$ of vertices in $G$ is a \emph{clique} if every two distinct vertices in $K$ are adjacent in $G$.
A set $M$ of edges in $G$ is a \emph{matching} if no two edges in $M$ share a vertex.
A set $S$ of vertices in $G$ is a \emph{vertex cover} if $G-S$ has no edges.

We define the tree-depth of a graph as in \cite{NO2012}.
A \emph{rooted forest} is a forest in which every connected component has  a specified node called a \emph{root}. 
A vertex $v$ is an \emph{ancestor} of a vertex $u$ in $T$ if the path from $u$ to the root in $T$ contains $v$. If $v$ is an ancestor of a vertex $u$ in $T$ and $uv\in E(T)$, then we say that $u$ is a \emph{child} of $v$.
The \emph{closure} of a rooted forest $T$, denoted by $\clos(T)$, is the graph obtained from $T$ by adding an edge between every vertex and all its ancestors.
The \emph{height} of a rooted forest is the number of vertices
in  a longest path from a root to a leaf.
The \emph{tree-depth} of a graph  $G$, denoted by $\td(G)$, is
the minimum height of a rooted forest whose closure contains $G$ 
as a subgraph. 

A \emph{bounded expansion} class was defined in terms of \emph{$t$-shallow minors}~\cite{NO2008a}.
A $t$-shallow minor of a graph $G$ is defined as a graph formed from $G$ by contracting a collection of vertex-disjoint subgraphs of radius $t$, and then deleting the remaining vertices of $G$. 
A class $\mathcal{C}$ of graphs has \emph{bounded expansion} if there exists a function $f\colon \mathbb{N}\to \mathbb{N}$ such that, in every $t$-shallow minor of a graph in $\mathcal{C}$, the ratio of edges to vertices is at most $f(t)$.
It turns out that low tree-depth colorings and weak colorings give alternative characterizations. 
As we use the two characterizations in the proof, we introduce them here. 

A \emph{vertex coloring} of a graph $G$ with a color set $S$ is a mapping $c\colon V(G) \to S$.  
For each $v\in V(G)$, we call $c(v)$ the color of $v$. 
A class $\cC$ of graphs \emph{admits low tree-depth colorings} if there exists a function $g:\mathbb{N}\rightarrow\mathbb{N}$ such that for all $p\in \mathbb{N}$, every graph $G\in \cC$ has a vertex coloring with at most $g(p)$ colors such that the union of any $i\leq p$ color classes induces a subgraph of tree-depth at most $i-1$.

For a graph $G$, we denote by $\Pi(G)$ the set of all linear orders of $V(G)$.  For $u,v\in V(G)$ and an integer $r\geq 0$, we say
that~$u$ is \emph{weakly $r$-reachable} from~$v$ with respect to a
linear order $L$, if there is a path $P$ of length at most~$r$ 
between~$u$ and $v$ such that $u$ is the smallest among the 
vertices of $P$ with respect to~$L$.
We denote by $\WReach_r[G,L,v]$ the set of vertices that are weakly $r$-reachable from~$v$ with respect to~$L$.
The \emph{weak $r$-coloring number $\wcol_r(G)$} of~$G$ is defined as
\begin{eqnarray*}
  \wcol_r(G)& := & \min_{L\in\Pi(G)}\:\max_{v\in V(G)}\:
                   \bigl|\WReach_r[G,L,v]\bigr|.
\end{eqnarray*}

\begin{theorem}[Ne\v{s}et\v{r}il and Ossona de Mendez~\cite{NO2008a}]\label{thm:tdcoloring}
A class of graphs has bounded expansion if and only if it admits low tree-depth colorings.
\end{theorem}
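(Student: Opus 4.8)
The plan is to prove the two implications separately. The reverse implication, that low tree-depth colorings force bounded expansion, is the routine direction, while the forward implication is where the real work lies and where the bounded expansion hypothesis is genuinely used.

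For the reverse direction, suppose $\cC$ admits low tree-depth colorings with function $g$. I would first record the basic fact that a graph of tree-depth at most $d$ is sparse: given a rooted forest $T$ of height $d$ whose closure $\clos(T)$ contains the graph, every vertex has at most $d-1$ proper ancestors, so eliminating vertices from the leaves upward exhibits the graph as $(d-1)$-degenerate, and in particular every subgraph has fewer than $d$ times as many edges as vertices. To establish bounded expansion it then suffices to bound, for each fixed $t$, the edge-to-vertex ratio of every $t$-shallow minor $H$ of a graph $G\in\cC$. I would fix $p$ as a function of $t$ (large enough to absorb a branch set of radius $t$ together with an incident connecting edge), color $G$ with the guaranteed $g(p)$ colors, and charge each edge of $H$ to the bounded tuple of color classes of $G$ that the two relevant branch sets meet. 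Since the union of any $i\le p$ color classes induces a subgraph of bounded tree-depth, each such piece is sparse by the degeneracy bound above, and summing over the at most $\sum_{j\le p}\binom{g(p)}{j}$ relevant choices of classes yields $\abs{E(H)}\le c_t\,\abs{V(H)}$ for a constant $c_t$ depending only on $g$ and $t$. Letting $t$ range over $\mathbb{N}$ produces the required expansion function.

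For the forward direction I would follow Ne\v{s}et\v{r}il and Ossona de Mendez. Suppose $\cC$ has bounded expansion with function $f$. Expansion at depth $0$ already gives that every $G\in\cC$ has degeneracy at most $2f(0)$, hence admits an acyclic orientation of bounded out-degree. The engine is then the technique of \emph{transitive fraternal augmentations}: starting from such an orientation, one repeatedly (i) adds an arc between any two vertices sharing an out-neighbor (the fraternal step) and (ii) takes a partial transitive closure, reorienting so that the maximum out-degree stays bounded at each round. The crucial point is that bounded expansion is exactly what keeps this process under control, because the newly created arcs correspond to short paths in $G$ and therefore live inside bounded-depth shallow minors whose density $f$ controls. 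After a number of rounds depending only on $p$, a proper coloring of the resulting bounded-degree augmented graph yields a vertex coloring of $G$ in which any $i\le p$ color classes induce a subgraph possessing an elimination order of depth at most $i-1$, i.e.\ of tree-depth at most $i-1$, with the number of colors depending only on $f$ and $p$; this is the function $g$.

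An alternative and perhaps more transparent route for the forward direction goes through the weak coloring numbers introduced above: bounded expansion is equivalent to $\sup_{G\in\cC}\wcol_r(G)<\infty$ for every $r$ (Zhu), and a bound on $\wcol_{2^{p}}(G)$ can be converted into a low tree-depth coloring with boundedly many colors by using the linear order $L$ witnessing the weak coloring number, since restricted to any $p$ color classes $L$ becomes an elimination order whose depth is governed by the sizes of the sets $\WReach_r[G,L,v]$. Either way, the main obstacle is the forward direction, and within it the heart of the matter is showing that the density control provided by bounded expansion survives the augmentation (respectively, bounds the weak reachability sets); this is precisely the content of the Ne\v{s}et\v{r}il--Ossona de Mendez machinery and is the step I expect to be hardest.
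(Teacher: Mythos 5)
First, a point of comparison: the paper itself gives \emph{no} proof of Theorem~\ref{thm:tdcoloring}; it is quoted from \cite{NO2008a} and used as a black box (alongside Theorem~\ref{thm:wcol}) in the proof of Theorem~\ref{thm:main2}, so your proposal can only be measured against the literature argument you are reconstructing. On that score, your forward direction is a faithful outline of the transitive fraternal augmentation technique, but it is an outline only: the step you yourself single out as the heart of the matter --- that bounded expansion keeps the densities and out-degrees under control through the augmentation rounds, and that a proper coloring of the augmented graph then yields colorings in which any $i\le p$ classes induce tree-depth at most $i-1$ --- is exactly what is deferred to the Ne\v{s}et\v{r}il--Ossona de Mendez machinery, and the same holds for the alternative route through weak coloring numbers. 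As a self-contained proof, nothing in this direction is actually established.

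The reverse direction, which you present as routine, has two genuine gaps as written. First, you charge each edge of a $t$-shallow minor $H$ to the color classes ``that the two relevant branch sets meet,'' but a branch set of radius $t$ may have arbitrarily many vertices and hence meet arbitrarily many color classes; the charging must instead use a short witness, namely, for an edge $uv$ of $H$, a path of length at most $2t+1$ formed by center-to-attachment paths inside $B_u$ and $B_v$ plus the connecting edge, which meets at most $2t+2$ classes (so $p=2t+2$). Second, and more seriously, the inference ``each piece is sparse, hence summing over pieces yields $\abs{E(H)}\le c_t\abs{V(H)}$'' is a non sequitur: the edges of $H$ charged to a fixed set $S$ of classes form a \emph{shallow minor} of $G[\bigcup S]$, not a subgraph of it, and sparsity is not preserved under taking minors --- the $1$-subdivision of $K_n$ has density below $2$ yet contains $K_n$ as a $1$-shallow minor. (This is precisely why bounded degeneracy does not imply bounded expansion, so no argument that uses only sparseness of the pieces can succeed.) The repair is the stronger fact that tree-depth is minor-monotone: the witness-path pieces inside each $B_v$ all contain the center of $B_v$, so their union is connected, whence the edges of $H$ charged to $S$ form a minor of $G[\bigcup S]$; this minor therefore has tree-depth at most $p-1$, hence at most $(p-2)\abs{V(H)}$ edges, and summing over the at most $\sum_{j\le p}\binom{g(p)}{j}$ choices of $S$ gives the required bound. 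With these two repairs your reverse direction becomes the standard argument.
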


\begin{theorem}[Zhu~\cite{Zhu2009}]\label{thm:wcol}
A class $\cC$ of graphs has bounded expansion if and only if there is a function $f:\mathbb{Z}_{\geq0} \to \mathbb{Z}_{\geq0}$ such that for all $G\in \cC$, we have $\wcol_r(G)\le f(r)$.
\end{theorem}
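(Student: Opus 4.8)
The plan is to prove the two implications separately, routing each through the most convenient characterization of bounded expansion. Let $\nabla_r(G)$ denote the maximum of $|E(H)|/|V(H)|$ over all $r$-shallow minors $H$ of $G$, so that, by definition, $\cC$ has bounded expansion exactly when there is a function $f$ with $\nabla_r(G)\le f(r)$ for all $G\in\cC$ and all $r$. For the ``if'' direction (bounded $\wcol$ $\Rightarrow$ bounded expansion) I will bound $\nabla_r$ directly by weak reachability, and for the ``only if'' direction (bounded expansion $\Rightarrow$ bounded $\wcol$) I will invoke the low tree-depth colorings supplied by Theorem~\ref{thm:tdcoloring}. The genuinely hard direction is the second one.

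\textbf{Bounded $\wcol$ implies bounded expansion.} I would show that for every $G$ and every $r$, each $r$-shallow minor $H$ of $G$ is $(\wcol_{2r+1}(G)-1)$-degenerate, whence $\nabla_r(G)\le\wcol_{2r+1}(G)-1$; substituting the class-wide bound on $\wcol$ then gives bounded expansion. Fix an optimal order $L$ with $c=\wcol_{2r+1}(G)$, and write $H$ via vertex-disjoint connected branch sets $B_i$ of radius at most $r$ with centers $c_i$. Order $V(H)$ by the $L$-positions of the centers. For a vertex $j$ and a back-neighbor $i$ (so $c_i<_L c_j$ and $i,j$ adjacent in $H$) there is a $G$-path from $c_j$ to $c_i$ of length at most $2r+1$, obtained by going through $B_j$, across one edge, and through $B_i$. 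Charging $i$ to the first vertex on this path, read from $c_j$, that is smaller than $c_j$ in $L$, one checks that this vertex lies in $\WReach_{2r+1}[G,L,c_j]\setminus\{c_j\}$, a set of size at most $c-1$. Thus $j$ has at most $c-1$ back-neighbors. The one technical point of this direction is that the charging can be made injective; this uses the disjointness of the branch sets, and I would verify it carefully.

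\textbf{Bounded expansion implies bounded $\wcol$.} The first ingredient is a clean lemma: if $\td(G)\le d$ then $\wcol_r(G)\le d$ for every $r$. Take a rooted forest $T$ of height at most $d$ with $G\subseteq\clos(T)$ and order $V(G)$ so that smaller depth in $T$ means smaller in $L$ (ties broken arbitrarily). Any path in $\clos(T)$ consists only of ancestor-descendant edges, so its $L$-minimum is its unique shallowest vertex, which is a common ancestor of all vertices on the path and in particular of the start vertex $v$; hence $\WReach_r[G,L,v]$ is contained in the set of ancestors of $v$, of size at most $d$. The second ingredient is Theorem~\ref{thm:tdcoloring}: since $\cC$ has bounded expansion it admits low tree-depth colorings, so with $p=r+1$ each $G\in\cC$ receives a coloring with $N:=g(r+1)$ colors in which any at most $r+1$ color classes induce a subgraph of tree-depth at most $r$. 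Every path of length at most $r$ from $v$ meets at most $r+1$ colors, so it lives inside one of the finitely many subgraphs induced by at most $r+1$ color classes, each of tree-depth at most $r$ and hence with $\wcol_r\le r$ by the lemma.

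The crux, and the main obstacle, is to merge the (at most $\binom{N}{r+1}$) depth orders coming from the tree-depth decompositions of all the $(\le r+1)$-colored subgraphs into a single global order $L$ on $V(G)$ for which every weakly $r$-reachable set stays small. Concretely I would attach to each vertex the tuple of its depths in all relevant tree-depth forests and order lexicographically; then for any fixed path the argument reduces to the single-forest case, and the number of weakly reachable vertices is bounded by a function of $N$ and $r$ only. Ensuring that one order simultaneously witnesses small weak reachability across all the colored subgraphs is exactly the delicate bookkeeping here; note, however, that Zhu's statement asks only for \emph{some} finite bound $f(r)$, so the (large) bound produced this way already suffices. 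An alternative route, avoiding the explicit merge, is to run $r$ rounds of transitive-fraternal augmentation: bounded expansion keeps the out-degrees of the augmented orientations bounded, and the final acyclic orientation yields a linear order witnessing a bound on $\wcol_r(G)$.
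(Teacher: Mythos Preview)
The paper does not prove Theorem~\ref{thm:wcol}; it quotes it as a known result of Zhu and uses it as a black box. So there is no ``paper's own proof'' to compare against. That said, here are comments on your sketch.

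Your ``if'' direction is the right idea, but the charging you describe is not yet injective and the radius is off. Ordering $V(H)$ by the $L$-positions of the centers $c_i$ and charging a back-neighbour $i$ to ``the first vertex on the $c_j$--$c_i$ path smaller than $c_j$'' can land inside $B_j$, and two different back-neighbours may be charged to the same vertex of $B_j$. The standard fix is to order $V(H)$ by the $L$-minimum $m_i$ of each branch set and charge $i$ to $m_i$; then disjointness of branch sets gives injectivity for free, but the witnessing path $m_j\to c_j\to\partial B_j\to\partial B_i\to c_i\to m_i$ has length at most $4r+1$, so you bound $\nabla_r(G)$ by $\wcol_{4r+1}(G)-1$ rather than $\wcol_{2r+1}(G)-1$. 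This is harmless for the qualitative statement.

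Your ``only if'' direction has a genuine gap. The lemma $\td(G)\le d\Rightarrow\wcol_r(G)\le d$ is fine, and it is true that every path of length $\le r$ lives in one of the $\binom{N}{r+1}$ bounded-tree-depth pieces. The problem is step~3: producing a \emph{single} linear order $L$ that simultaneously witnesses small weak reachability in all pieces. Your lexicographic-depth proposal does not do this: for a fixed path sitting in piece $S$, the $L$-minimum along the path is determined by the global lexicographic order, not by the depth order of the forest for $S$; a vertex can be shallow in one forest and deep in another, so the $L$-minimum need not be an ancestor of $v$ in the relevant forest, and the ancestor bound does not apply. This is exactly why Zhu's argument does not go through low tree-depth colorings directly but instead uses iterated transitive-fraternal augmentations (your ``alternative route''): that construction produces one acyclic orientation, hence one linear order, with bounded weak reachability. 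If you want a self-contained proof, develop that route; the merging-of-orders idea, as stated, does not work.
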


\section{$r$-powers of graphs of bounded tree-depth}\label{sec:treedepth}
In this section, we prove that the class of $r$-powers of graphs of bounded tree-depth has the rainbow property, which is a core in the proof of Theorem~\ref{thm:main2}.
Let $\mathcal{T}_{d}$ be the class of graphs of tree-depth at most $d$.

\begin{theorem}\label{thm:treedepth}
Let $d, r, n$ be positive integers.
Then $f_{\mathcal{T}^r_{d}}(n)<\infty$.
\end{theorem}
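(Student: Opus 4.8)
The plan is to prove the statement by induction on the tree-depth $d$, reducing from $\mathcal{T}_{d}$ to $\mathcal{T}_{d-1}$ by peeling off the roots of an elimination forest. For the base case $d\le 1$ the graph $G$, and hence $G^r$, is edgeless, so every vertex set is independent and a greedy system of distinct representatives shows $f_{\mathcal{T}^r_1}(n)\le n$: having chosen fewer than $n$ distinct vertices from distinct sets, every unused set (of size $n$) still contains an unchosen vertex. For the inductive step I first record the structure I will use. Fix a rooted forest $T$ of height at most $d$ with $G\subseteq\clos(T)$, and let $R$ be its set of roots. Since every connected induced subgraph of $\clos(T)$ has a vertex that is a $T$-ancestor of the whole subgraph, a shortest $u$--$v$ path in $G$ has a topmost vertex that is a common ancestor of $u$ and $v$, and therefore
\[
\dist_G(u,v)=\min_{t}\bigl(\dist_G(u,t)+\dist_G(t,v)\bigr),
\]
the minimum taken over common $T$-ancestors $t$ of $u$ and $v$. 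In particular adjacency in $G^r$ is governed by the boundedly many distance profiles of $u$ and $v$ to their at most $d$ ancestors. Two corollaries drive the induction: the set $R$ is \emph{independent} in $G^r$ (roots of distinct components lie at infinite distance), and each root $\rho$ is a \emph{universal} vertex of the induced subgraph $G^r[B_r(\rho)]$, where $B_r(\rho)$ is the ball of radius $r$ around $\rho$ in $G$.

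Given a family $\mathcal{I}$ of $N$ independent sets of size $n$, I would split every $I\in\mathcal{I}$ into three parts: its \emph{root part} $I\cap R$, its \emph{root-near} non-root part (vertices within distance $r$ of their own root), and its \emph{root-far} part (vertices at distance more than $r$ from their root). Applying pigeonhole to the triple of sizes of these parts, I pass to a subfamily of size at least $N/(n+1)^{3}$ on which the three sizes are constants $a,b,c$ with $a+b+c=n$. Because $G-R\in\mathcal{T}_{d-1}$, the intention is to extract a rainbow independent set of size $c$ among the root-far parts by the induction hypothesis for $\mathcal{T}^r_{d-1}$, to take $a$ roots from the root parts (these are automatically mutually non-adjacent), and to handle the root-near parts inside the individual balls; the roots and the recursively chosen vertices are drawn from sets not already spent, keeping ownership injective once $N$ is large enough relative to the inductive bound $F(d-1,r,\cdot)$ and $n$.

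The delicate point — the step I expect to carry the real weight — is that $G^r$ restricted to $V(G)\setminus R$ is \emph{not} equal to $(G-R)^r$: deleting a root can only lengthen distances, so $(G-R)^r$ is strictly sparser, and a set independent in $G^r$ need not be independent in the graph on which I want to induct. The displayed distance formula localizes this discrepancy: if an edge $uv$ of $G^r$ between non-root vertices is destroyed by deleting $R$, then every shortest $u$--$v$ path uses a root, which is then its topmost vertex $\rho$, whence $\dist_G(u,\rho)+\dist_G(\rho,v)=\dist_G(u,v)\le r$ and both endpoints lie in $B_r(\rho)$. Thus all extra edges are confined to the root-balls, where $\rho$ is universal, and for any \emph{root-far} vertex $G^r$ and $(G-R)^r$ agree on all incident edges. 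Consequently the root-far parts are independent in $(G-R)^r$ if and only if they are independent in $G^r$, so the recursion applies to them verbatim; the root-near parts I would treat by a nested instance of the same induction inside each ball, using that removing the universal root $\rho$ from $B_r(\rho)$ leaves an elimination forest of height at most $d-1$, with the discrepancy at that level again confined to sub-balls around the children of $\rho$.

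Finally I would assemble the pieces across the (possibly unboundedly many) components by a Hall-type/iterated-pigeonhole argument, greedily combining the selected roots, the ball-vertices, and the recursively selected root-far vertices while checking — again from the common-ancestor formula — that a chosen root is non-adjacent to the chosen lower vertices and that all owners remain distinct. The quantitative bound $F(d,r,n)$ then follows by composing the pigeonhole losses with $F(d-1,r,\cdot)$. The hard part will be exactly this confinement-of-extra-edges analysis together with the bookkeeping of the reassembly over many components; the tree-depth drop itself is clean once the density discrepancy between $G^r-R$ and $(G-R)^r$ is neutralized by the localization to root-balls.
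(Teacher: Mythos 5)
Your localization lemma is correct and is a genuine first step: any edge of $G^r$ between non-root vertices that disappears in $(G-R)^r$ has both ends in a single root-ball, so the root-far parts can indeed be recursed on in $(G-R)^r$, and roots are automatically non-adjacent both to each other and to root-far vertices. The gap lies in the other two thirds of the plan. First, the ``nested instance of the same induction inside each ball'' is not an instance of the statement you are proving: the graph $G^r[B_r(\rho)\setminus\{\rho\}]$ is not the $r$-power of a graph of tree-depth $d-1$; it is $(G-\rho)^r$ restricted to the ball \emph{plus} the threshold-type edges $\{uv : \dist_G(u,\rho)+\dist_G(v,\rho)\le r\}$. Moreover, these extra edges are not ``confined to sub-balls around the children of $\rho$'' as you assert --- a path of length at most $r$ through $\rho$ joins vertices lying in \emph{different} children's subtrees, so the discrepancy graph runs \emph{across} sub-balls rather than inside them. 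Hence the induction hypothesis, which concerns powers of tree-depth-$(d-1)$ graphs, cannot be invoked for the root-near parts, and the recursion does not close without strengthening the statement being proved.

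Second, even granting rainbow sets inside each of the three parts, your assembly step has no mechanism enforcing independence \emph{between} parts. Pigeonholing only on the sizes $(a,b,c)$ is too coarse: a root-far vertex produced by the recursive call may be adjacent (already in $(G-R)^r$) to a root-near vertex chosen inside a ball, and a chosen root $\rho$ is adjacent to every root-near vertex of every other set that happens to lie in $B_r(\rho)$; adjacency between such pairs is not determined by your near/far classification, so no check based on the common-ancestor formula can rule these conflicts out. This is precisely what the paper's machinery is built for: it strengthens the theorem to Proposition~\ref{prop:inductivenew}, which carries a parameter recording the depth of the first branching vertex of the family, inducts on $n$ plus that parameter, and pigeonholes on full distance \emph{profiles} --- vectors of distances, truncated at $r+1$, to \emph{all} vertices on the path from the root down to the first branching node --- so that whenever two chosen vertices come from different subtrees, one of them can be ``reflected'' into the independent set owning the other (properties $(\ast)$ and $(\ast\ast)$), transferring that set's independence to the chosen pair via Claim~\ref{claim:connection}; a K\H{o}nig/matching dichotomy then either spreads the sets across many subtrees or concentrates them in few. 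Your binary root-near/root-far split is the one-coordinate, two-valued shadow of these profiles, and it is too weak to support the combination step; repairing it would essentially force you to rebuild the paper's profile and matching arguments.
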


For a positive integer $d$, let $\cF_{d}$ be the class of rooted trees $F$ of height $d$, and $\cT^*_d$ be the set of all subgraphs of graphs in $\{ \clos(F) : F\in \mathcal{F}_{d} \}$.
Note that $\mathcal{T}_d\subseteq \mathcal{T}^*_{d+1}$.
To prove Theorem~\ref{thm:treedepth}, we will prove $f_{\cT^*_{d}}(n) < \infty$ for all positive integers $d$ and $n$.

Let $F\in \cF_d$ and let $\cI$ be a family of vertex sets of $F$.
We define $t_{F}(\cI)\in \{1, 2, \ldots, d\}$ as the minimum integer $t$ satisfying the following: there is a node $v$ of distance $t-1$ from the root such that there are at least two subtrees, rooted at children of $v$ in $F$, meeting some vertex sets in $\cI$.
If there is no such $t$, then we define $t_{F}(\cI) = d$.

We use the following result about the class of graphs whose chromatic number is at most $k$.
\begin{theorem}[Aharoni, Briggs, J. Kim, and M. Kim~\cite{ABKK2019}]\label{thm:colorable}
Let $\cC$ be the class graphs whose chromatic number is at most $k$. Then $f_{\cC}(n)=k(n-1)+1$.
\end{theorem}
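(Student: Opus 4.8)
The plan is to prove the two inequalities $f_{\mathcal{C}}(n)\ge k(n-1)+1$ and $f_{\mathcal{C}}(n)\le k(n-1)+1$ separately. For the lower bound I would exhibit a single $k$-colorable graph witnessing $f_G(n)\ge k(n-1)+1$. Take $G$ to be the complete $k$-partite graph with parts $V_1,\dots,V_k$, each of size exactly $n$; then $G$ has chromatic number $k$, so $G\in\mathcal{C}$. Let $\mathcal{I}$ consist of each part $V_i$ listed $n-1$ times, so $\abs{\mathcal{I}}=k(n-1)$. Every independent set of $G$ lies inside a single part, hence the only independent sets of size $n$ are the parts themselves; and a part $V_i$ can be realized as a rainbow independent set only by injecting its $n$ vertices into $n$ distinct members of $\mathcal{I}$ containing them, i.e.\ into $n$ distinct copies of $V_i$ --- but only $n-1$ copies are present. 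Thus $\mathcal{I}$ has no rainbow independent set of size $n$, giving $f_{\mathcal{C}}(n)\ge k(n-1)+1$.

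For the upper bound, fix $G\in\mathcal{C}$ with a proper $k$-coloring into classes $C_1,\dots,C_k$ and a family $\mathcal{I}$ with $\abs{\mathcal{I}}=k(n-1)+1$ of independent sets of size $n$; I must produce a rainbow independent set of size $n$. The guiding observation is that any set of $n$ distinct vertices lying inside one color class $C_i$ is automatically independent, so it would suffice to find $n$ members of $\mathcal{I}$ admitting a system of distinct representatives inside a common $C_i$. I would argue by induction on $k$. The base case $k=1$ is immediate: $G$ is edgeless, every size-$n$ selection is independent, and a system of distinct representatives for $n$ of the sets exists by Hall's theorem, since each set has size $n$ and so every $s\le n$ of them have union of size $\ge n\ge s$. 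For the inductive step, form the bipartite graph between $\mathcal{I}$ and $C_k$ recording membership and take a maximum matching: if it has size at least $n$ we obtain the desired rainbow independent set inside $C_k$. Otherwise K\"onig's theorem yields a vertex cover of size at most $n-1$, so all but at most $n-1$ families have their intersection with $C_k$ confined to a small set $W\subseteq C_k$; after setting aside the covering families one hopes to recurse on the $(k-1)$-colorable graph $G-C_k$, using the vertices of $W$ to compensate for the at most $\abs{W}$ vertices that the confined families lose when $C_k$ is deleted.

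The main obstacle is precisely this recombination step: the compensating vertices of $W\subseteq C_k$ may be adjacent in $G$ to the rainbow independent set built inside $G-C_k$, so the two pieces need not union to an independent set, and the counting must split the $k(n-1)+1$ families correctly between those ``used to enter $C_k$'' and those ``passed to the smaller instance.'' I expect the cleanest route around this is to argue instead with a \emph{maximum} rainbow independent set $J$, of size $t$ say: every unused family is then contained in the closed neighborhood of $J$, and since each family has size $n>t$, pigeonhole forces two of its vertices into a common closed neighborhood $N[v]$ for some $v\in J$. The crux is to convert this local surplus, together with the $k$-coloring, into a bound of $(k-1)t$ on the number of unused families, so that $\abs{\mathcal{I}}\le kt\le k(n-1)$ whenever $t\le n-1$, contradicting $\abs{\mathcal{I}}=k(n-1)+1$ and forcing $t\ge n$. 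A useful handle here is that if any single set is repeated more than $t$ times among the unused families, one can match $t+1$ of its vertices to $t+1$ of those copies and obtain a rainbow independent set of size $t+1$ inside $N[J]$, the very contradiction that should drive the desired bound via a defect-Hall or augmenting argument.
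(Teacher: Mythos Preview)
The paper does not give its own proof of this statement; Theorem~\ref{thm:colorable} is quoted from \cite{ABKK2019} and used as a black box in the base case $p=0$ of Proposition~\ref{prop:inductivenew}. So there is nothing in the paper to compare your argument against. What I can do is assess whether your proposal actually proves the theorem.

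Your lower bound is correct: the complete $k$-partite graph with parts of size $n$, each part listed $n-1$ times, is the standard tight example.

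Your upper bound, however, is not a proof but a sketch of two strategies, neither of which you carry through. In the induction-on-$k$ approach you correctly identify the obstacle (the recombination of the piece in $W\subseteq C_k$ with the piece produced recursively in $G-C_k$ need not be independent), and you do not resolve it. In the maximum-rainbow-set approach you reduce the problem to the inequality ``number of unused families $\le (k-1)t$'' (equivalently $|\mathcal{I}|\le kt$) whenever the maximum rainbow independent set has size $t<n$, but you never prove this inequality. Note that this inequality is \emph{exactly} the theorem with $n$ replaced by $t+1$: if each $I$ has size $n>t$, truncate it to a subset of size $t+1$; a rainbow independent set of size $t+1$ for the truncated family would contradict maximality of $t$, so the truncated family witnesses $f_G(t+1)>|\mathcal{I}|$, which by the theorem forces $|\mathcal{I}|\le k t$. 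Hence invoking that bound without argument is circular (or, at best, an unfinished induction on $n$ that still leaves the critical case $t=n-1$ untouched). The ``handle'' you offer at the end---that no single set can be repeated more than $t$ times among the unused families---is true, but it does not bound the total number of unused families and does not lead to the inequality you need.

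In short: the construction for $f_{\mathcal C}(n)\ge k(n-1)+1$ is fine, but the argument for $f_{\mathcal C}(n)\le k(n-1)+1$ has a genuine gap at exactly the point you flag as ``the crux,'' and nothing in the proposal closes it.
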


\begin{proposition}\label{prop:inductivenew}
For integers $d, n, r\ge 1$ and $0\le p\le d-1$, there exists an integer $M=M(d,n,p,r)$ satisfying the following.
Let $F$ be a rooted tree in $\mathcal{F}_{d}$ such that $G$ is a subgraph of $\clos(F)$, 
and $\cI$ be a set of $M$ independent sets of size $n$ in $G^r$ such that 
$p=d-t_{F}(\cI)$. Then $\cI$ contains a rainbow independent set of size $n$.
\end{proposition}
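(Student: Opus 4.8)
The plan is to prove the statement by induction on $p$, with the easy case $p=0$ as the base and the branching structure recorded by $t_F(\cI)=d-p$ driving the inductive step.

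\emph{Setup and structural facts.} Write $t=d-p=t_F(\cI)$. First I would unwind the definition of $t_F(\cI)$ into a \emph{stem}: a path $v_0,v_1,\dots,v_{t-1}$ where $v_0$ is the root, $v_j$ sits at distance $j$, and $v_{t-1}$ is the first node with at least two children whose subtrees meet $\cI$; for $j<t-1$ the node $v_j$ is the \emph{unique} node at distance $j$ having descendants in $\bigcup\cI$. Put $S=\{v_0,\dots,v_{t-1}\}$ and let $F_1,\dots,F_m$ ($m\ge 2$) be the subtrees rooted at the children of $v_{t-1}$ meeting $\cI$; each $F_i$ has height at most $p$ and $G[V(F_i)]$ is a subgraph of $\clos(F_i)$. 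Two facts then organize the proof. (i) In $G$, the set $S$ separates the subtrees from one another, since any path of $\clos(F)$ leaving $V(F_i)$ must pass through an ancestor of $v_{t-1}$, all of which lie in $S$. (ii) If $u\in V(F_i)$ satisfies $\dist_G(u,S)>r$, then every $G^r$-neighbour of $u$ lies in $V(F_i)$, because a $G$-path from $u$ to any vertex outside $V(F_i)$ first reaches $S$ and so has length exceeding $r$. Consequently, among vertices far from $S$ in this sense, $G^r$-adjacency inside $F_i$ coincides with $(G[V(F_i)])^r$-adjacency, and far vertices lying in distinct subtrees are pairwise non-adjacent in $G^r$.

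\emph{Base case.} When $p=0$ we have $t=d$, so there is no branching and $\bigcup\cI$ is contained in a single root-to-leaf path of $F$, a set of at most $d$ vertices. Hence $\cI$ has at most $2^d$ distinct members, and choosing $M>(n-1)2^d$ forces some independent set to occur at least $n$ times; its $n$ copies immediately witness a rainbow independent set, fixing $M(d,n,0,r)$.

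\emph{Inductive step.} Assuming the statement for every smaller value of the parameter, I would first pigeonhole on the trace $I\cap S$: there are at most $2^t\le 2^d$ possibilities, so a $2^{-d}$-fraction of $\cI$ shares a common stem pattern $P$, and each such $I$ splits as $P\sqcup I'$ with $I'\subseteq\bigcup_i V(F_i)$. Within each $F_i$, the part of $I'$ lying far from $S$ forms an independent set in $(G[V(F_i)])^r$ by fact (ii); after a further pigeonhole to fix these far-sizes, I would apply the induction hypothesis inside $F_i$, where the local branching parameter is at most $p-1$ (its height is at most $p$ and its own $t$-value is at least $1$), obtaining rainbow independent sets supported on far vertices. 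Because far parts in distinct subtrees are mutually non-adjacent and invisible to $S$, rainbow sets produced in different subtrees from disjoint subfamilies can be concatenated, and using $m\ge 2$ together with iterated pigeonhole lets me aggregate their sizes up toward $n$, absorbing the chosen stem vertices of $P$ as needed.

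\emph{Main obstacle.} The step I expect to be genuinely hard is controlling the \emph{near} part, namely the subtree vertices within distance $r$ of $S$, for which fact (ii) gives no leverage. A star already shows this part can induce in $G^r$ an arbitrarily large clique (when $r\ge 2$) or an arbitrarily large independent set (when $r=1$), so it has neither bounded chromatic number nor bounded independence number, and no single colouring will tame it. The intended remedy is to pigeonhole on the bounded attachment data of each vertex to the bounded set $S$ (there are at most $(r+2)^t$ distance profiles), thereby splitting the near interactions into pieces on which either Theorem~\ref{thm:colorable} applies directly or the independence number is bounded so that the contribution to any rainbow set is bounded; the bounded weak $r$-reachability of $G$ from Theorem~\ref{thm:wcol} is what I would use to keep the number of such pieces finite. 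Reconciling the colour-based bound for the near part with the recursive bound for the far part, and fitting all the pigeonhole stages and the size bookkeeping into a single finite $M(d,n,p,r)$, is where the real work lies.
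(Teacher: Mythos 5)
Your setup is on the right track and matches the paper's opening moves: the stem $v_0,\dots,v_{t-1}$, the classification of vertices by distance profiles to the stem truncated at $r+1$, and the observation that the stem separates the subtrees are exactly the paper's first steps (your facts (i)--(ii) are a weaker form of the paper's Claim~\ref{claim:connection}). Your base case, though different from the paper's (which applies Theorem~\ref{thm:colorable} to a graph of chromatic number at most $d$), is also fine, since $n$ repeated copies of a single independent set of size $n$ do form a rainbow independent set. However, the inductive step has a genuine gap, and it is precisely the part you yourself flag as ``where the real work lies.'' Restricting the subtree analysis to \emph{far} vertices is a dead end: the entire family $\cI$ can consist of near vertices. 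For instance, with $r=2$, take the spider whose root $z$ has children $c_1,\dots,c_m$, each $c_i$ having one leaf child $l_i$; here the stem is $\{z\}$, every vertex is within distance $2$ of the stem, the leaves $l_1,\dots,l_m$ form an arbitrarily large independent set in $G^2$, and $\{z,c_1,\dots,c_m\}$ is an arbitrarily large clique. So the near part has neither bounded independence number nor bounded chromatic number, no ``bounded contribution'' argument disposes of it, and rainbow sets must in general be built out of near vertices. Moreover, near vertices lying in \emph{distinct} subtrees can be adjacent in $G^r$ (the $c_i$ above are pairwise adjacent), so your concatenation step fails exactly where it is needed. The appeal to Theorem~\ref{thm:wcol} is also misplaced: the number of distance profiles is $(r+2)^t$, finite with no input from weak coloring numbers, which play no role inside this proposition.

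What actually closes the gap in the paper is a mechanism your sketch never reaches. After pigeonholing to a subfamily $\cI_1$ in which all members meet exactly the same profile classes (property $(\ast)$), the paper builds, for each profile $\cA$, an auxiliary bipartite graph $B_{\cA}$ between the subtrees $\cH$ and the family $\cI_1$, and splits according to whether some $B_{\cA}$ has a matching of size $n$ or, via K\H{o}nig's theorem, every $B_{\cA}$ has a vertex cover of size at most $n-1$. In both cases the independence of the assembled set is certified by an ``independence transfer'' trick: if the constructed set uses a vertex of $U_{\cB}$, then by $(\ast)$ (or its refinement $(\ast\ast)$) the relevant family member $I$ also meets $U_{\cB}$; since $I$ is independent and meets $U_{\cA}$ as well, Claim~\ref{claim:connection} forces $a_i+b_i>r$ for all $i$, which then guarantees that the chosen representatives of $U_{\cA}$ and $U_{\cB}$ in distinct subtrees are non-adjacent. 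This is how near vertices are tamed --- not by discarding them. Finally, the induction must run on $n+p$, not on $p$ alone: the pieces of the rainbow set produced inside and outside a chosen subtree have sizes $y<n$ and $n-y$, so the statement for smaller $n$ is indispensable, and the second application must exclude the family members already consumed by the first so that the rainbow injection stays injective. Without the matching/vertex-cover dichotomy and the transfer trick, the aggregation and ``absorption'' steps you describe cannot be completed.
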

\begin{proof}
Let $q:=t_{F}(\cI)$.
We set
$M(d, 1, p, r):=1$ for all integers $d, r\ge 1$ and $0\le p\le d-1$, and 
$M(d, n, 0, r):=d(n-1)+1$ for all integers $d, r\ge 1$ and $n\ge 2$.
For integers $d,r,p\ge 1$ and $n\ge 2$, we set 
\begin{itemize}
\item $M_2:=\max (M(d, n-1,p,r)+n, M(d, n, p-1, r) )$,
\item $M_1:=M_2\left( (n+1)^{(n-1)(r+2)^{2q}} -1 \right)+(n-1)(r+2)^q+1$,
\item $M=M(d, n, p, r):=2^{(r+2)^q}(M_1-1)+1$.
\end{itemize}
We prove the proposition by induction on $n+p$. 
The case $n=1$ is obvious.
If $p=0$, then $t_{F}(\cI )=d$ and all the independent sets lie in a path $P$ from a leaf to the root in $F$. Then $G^r[V(P)]$ is a graph with chromatic number at most $d$, and the result follows from Theorem~\ref{thm:colorable}.
We assume $n\ge 2$ and $p\ge 1$.

Let $z$ be the root node of $F$, and $u$ be the node of $F$ such that $\dist_F(u, z)=t_{F}(\cI)-1$ and there are at least two subtrees rooted at children of $u$ in $F$, meeting some sets in $\cI$. 
 Let $v_1v_2 \cdots v_q$ be the path in $F$ from $z$ to $u$ where $z=v_1$ and $u=v_q$.
For convenience, we assume that $G$ contains all vertices of $\{v_1, \ldots, v_q\}$ by adding isolated vertices if necessary.
Let $\cH$ be the set of subtrees of $F$ rooted at children of $v_q$.
By the definition of $t_{F}(\cI)$, all the independent sets of $\cI$ are contained in $\{v_1, \ldots, v_q\}\cup \bigcup_{X\in \cH} V(X)$.

We classify vertices of $G$ by the distance to each vertex in $\{v_1, v_2, \ldots, v_q\}$.
For each vertex $v$ of $G$, let $d(v)$ be the vector $(d_1, d_2, \ldots, d_q)$ such that 
$d_i=\dist_G(v, v_i)$ if $\dist_G(v, v_i)\le r$ and $d_i=r+1$ otherwise. Note that $\dist_G(v, v_i)=\infty$ if $v$ and $v_i$ are not contained in the same connected component of $G$.
For each $\cA\in \{0, 1, \ldots, r+1\}^q$, 
let $U_{\cA}:=\{v\in V(G) : d(v)=\cA\}$. This is a partition of the vertex set of $G$ into $(r+2)^q$ sets.

 We use the following property of the sets $U_{\cA}$.
 \begin{claim}\label{claim:connection}
 Let $v, w$ be two vertices of $G$ such that 
\begin{itemize}
\item $v\in U_{(a_1, a_2, \ldots, a_q)}$ and $w\in U_{(b_1, b_2, \ldots, b_q)}$ for some vectors $(a_1, \ldots, a_q)$ and $(b_1, \ldots, b_q)$ in $\{0, 1, \ldots, r+1\}^q$.
\end{itemize}
If $a_i+b_i\le r$ for some $i\in \{1, 2, \ldots, q\}$, then $v$ is adjacent to $w$ in $G^r$.
On the other hand, if 
$v$ is contained in some subtree of $\cH$, say $Q$, and 
 $w$ is contained in $V(G)\setminus V(Q)$, and $a_i+b_i>r$ for all $i\in \{1, 2, \ldots, q\}$, 
 then $v$ is not adjacent to $w$ in $G^r$.
 \end{claim}
 \begin{clproof}
Suppose that $a_i+b_i\le r$ for some $i$. 
Then there is a path of length at most $r$ in $G$ from $v$ to $w$ through $v_i$.
Thus it is obvious that $v$ is adjacent to $w$ in $G^r$.

Now, suppose that 
$v$ is contained in some subtree of $\cH$, say $Q$, and 
 $w$ is contained in $V(G)\setminus V(Q)$, and
$v$ is adjacent to $w$ in $G^r$.
By definition, there is a path $P$ of length at most $r$ from $v$ to $w$ in $G$.
Because in $G-\{v_1, v_2, \ldots, v_q\}$, there is a no path from $V(Q)$ to $V(G)\setminus V(Q)$, 
the path $P$ meets $\{v_1, v_2, \ldots, v_q\}$.
Then 
we have that $a_i+b_i\le r$ for some $i\in \{1, 2, \ldots, q\}$.
 \end{clproof}

As $\abs{\cI}=M=2^{(r+2)^q}(M_1-1)+1$, by the pigeonhole principle, we can choose a subset $\cI_1$ of size at least $M_1$ such that 
\begin{itemize}
\item[($\ast$)] for every $I_1, I_2\in \cI_1$ and every vector $\mathcal{A}\in \{0, 1, \ldots, r+1\}^q$, 
$I_1\cap U_{\cA}=\emptyset$ if and only if $I_2\cap U_{\cA}=\emptyset$.
\end{itemize}

Now, for each vector $\mathcal{A}\in \{0, 1, \ldots, r+1\}^q$, 
we construct an auxiliary bipartite graph $B_{\mathcal{A}}$ on the bipartition $(\cH, \cI_1 )$ such that for $H\in \cH$ and $I\in \cI_1$, $H$ is adjacent to $I$ in $B_{\mathcal{A}}$ if and only if 
$I\cap U_{\cA}$ contains a vertex of $H$.
We divide cases depending on whether $B_{\mathcal{A}}$ contains a matching of size $n$ for some vector $\cA$ or not.

\smallskip

\noindent{\bf (Case 1.)} There exists a matching of size $n$ in $B_{\cA}$ for some vector $\cA=(a_1, a_2, \ldots, a_q)$, say $\{F_1I_1, F_2I_2, \ldots, F_nI_n\}$, where $F_1, \ldots, F_n\in \cH$ and $I_1, \ldots, I_n\in \cI_1$.

We choose a vertex $x_j$ in $I_j\cap V(F_j)\cap U_{\cA}$ for each $j$.
If $2a_i>r$ for all $i\in \{1, 2, \ldots, q\}$, then by Claim~\ref{claim:connection}, $x_i$ and $x_j$ are not adjacent for any $i \neq j$ since they are contained in distinct $V(F_j)$.
Thus $\{x_1, x_2, \ldots, x_n\}$ is a rainbow independent set, so we are done. 
Therefore, we may assume that 
$2a_i\le r$ for some $i\in \{1, 2, \ldots, q\}$. 
It implies that $U_{\cA}$ is a clique in $G^r$, because any two vertices in $U_{\cA}$ can be linked by a path of length at most $r$ through $v_i$.
Moreover, it implies that 
every independent set in $\cI_1$ contains exactly one vertex of $U_{\cA}$.

We define
\[\cI_1^*:=\{I\setminus U_{\cA}: I\in \cI_1\setminus \{I_1, I_2, \ldots, I_n\}\}. \]
Then $\abs{\cI_1^*}\ge M_1-n\ge M_2-n\ge M(d, n-1,p,r)$ and each independent set of $\cI_1^*$ has size exactly $n-1$.
As $\abs{\cI_1^*} \ge M(d, n-1,p,r)$,
$\cI_1^*$ contains a rainbow independent set $J$ of size $n-1$. 
As $J$ has size $n-1$, there exists $i \in \{1,2,\ldots,n\}$ such that $J$ does not meet $F_i$.
Without loss of generality, we assume that $J$ does not contain a vertex of $F_1$.

We claim that $x_1$ has no neighbor in $J$ in $G^r$, which implies that $\{x_1\}\cup J$ is a rainbow independent set in $G^r$.
Take any vertex $w$ in $J$.
Assume that $w$ is contained in $U_{\cB}$ for some vector $\cB=(b_1, b_2, \ldots, b_q)\in \{0, 1, \ldots, r+1\}^q$.
As $w$ is contained in $U_{\cB}$, by ($\ast$), $I_1$ also contains a vertex in $U_{\cB}$.
Since $I_1$ is independent, by Claim~\ref{claim:connection}, $a_i+b_i>r$ for all $i\in \{1, 2, \ldots, q\}$.
On the other hand, since $w$ is not in $F_1$, by Claim~\ref{claim:connection}, $x_1$ is not adjacent to $w$ in $G^r$.
So, we conclude that $\{x_1\}\cup J$ is a rainbow independent set, as required.

\smallskip

\noindent {\bf (Case 2.)} $B_{\cA}$ contains no matching of size $n$ for all vectors $\cA\in \{0, 1,  \ldots, r+1\}^q$.

By K\H{o}nig's Theorem, each $B_{\cA}$ contains a vertex cover $S_{\cA}$ of size at most $n-1$.
It means that all the independent sets that are contained in $\cI_1\setminus S_{\cA}$
lie in the union of $\{v_1, v_2, \ldots, v_q\}$ and the vertex sets of the subtrees in $S_{\cA}\cap \cH$. 
We define 
 \[ S:= \bigcup_{\cA \in \{0, 1, \ldots, r+1\}^q} S_{\cA}. \]
Collecting the information of each $S_{\cA}$, we can observe that 
\begin{enumerate}[(i)]
	\item $\abs{\cI_1\setminus S}\ge M_1-(n-1)(r+2)^q$ and $\abs{S\cap \cH} \le (n-1)(r+2)^q$, 
	\item every independent set in $\cI_1\setminus S$ lies in the union of $\{v_1, v_2, \ldots, v_q\}$ and the vertex sets of the subtrees in $S\cap \cH$.
\end{enumerate}

Now, we want to take a subset of $\cI_1$ so that the selected independent sets have the same number of vertices in each set $V(H)\cap U_{\cA}$ for $H\in S\cap \cH$ and $\cA\in \{0, 1, \ldots, r+1\}^q$.
Note that 
\[M_1-(n-1)(r+2)^q\ge M_2 \left( (n+1)^{(n-1)(r+2)^{2q}} -1 \right)+1.\]
So, by properties (i) and (ii), there exists a subset $\cI_2\subseteq \cI_1$ of size at least $M_2$
such that 
\begin{itemize}
\item[($\ast\ast$)] all independent sets in $\cI_2$ have the same number of intersections on $V(H)\cap U_{\cA}$ for each subtree $H$ in $S\cap \cH$ and each vector $\cA$ in $\{0, 1, \ldots, r+1\}^q$.
\end{itemize}

We assume that all the independent sets in $\cI_2$ lie in the union of $\{v_1, \ldots, v_q\}$ and exactly one subtree of $S\cap \cH$. It means that $t_{F}(\cI_2) > t_{F}(\cI)$. 
As $\abs{\cI_2}\ge M(d, n, p-1, r)\ge M(d, n, j, r)$ for all $0\le j\le p-1$, $\cI_2$ contains a rainbow independent set of size $n$. From now on, we assume that every independent set in $\cI_2$ meets at least two subtrees in $S\cap \cH$. 

Let $Q$ be a subtree in $S\cap \cH$ that meets an independent set in $\cI_2$, and $y$ be the number of intersections of each independent set with $Q$.
Let $\cI_2^*:=\{I\cap V(Q): I\in \cI_2\}$.
Observe that 
\begin{align}\label{ineq:M}M(d, n-1,p,r) \geq M(d, i,j,r)\end{align} 
for all $i \in \{1,2,\ldots,n-1\}$ and $j \in \{0,1,\ldots,p\}$.
Since \[\abs{\cI_2^*}=\abs{\cI_2} > M(d, n-1,p,r),\] by the induction hypothesis and \eqref{ineq:M}, $\cI_2^*$ contains a rainbow independent set $R_1 = \{z_{i_1}, z_{i_2},\ldots,z_{i_y}\}$ of size $y < n$ such that $z_{i_j} \in I_{i_j} \in \cI_2^*$.
Let \[\cI_2^+:=\{ I\setminus V(Q) : I\in \cI_2 \setminus \{I_{i_1},I_{i_2}\ldots,I_{i_y}\}\}.\]
Note that each set in $\cI_2^+$ has size $n-y$.
Since \[\abs{\cI_2^+} = \abs{\cI_2} - y \geq M(d,n-1,p,r) +n - y > M(d,n-1,p,r),\]
by the induction hypothesis and \eqref{ineq:M}, $\cI_2^+$ also contains a rainbow independent set $R_2$ of size $n-y$.
In particular, $R_1 \cup R_2$ is a rainbow set for $\cI_2$.

We claim that $R_1\cup R_2$ is a rainbow independent set of size $n$.
Let $w_1\in R_1$ and $w_2\in R_2$.
Assume $w_1\in U_{\cB_1}, w_2\in U_{\cB_2}$ for some vectors $\cB_1=(b^1_1, b^1_2, \ldots, b^1_q)\in \{0, 1, \ldots, r+1\}^q$ and $\cB_2=(b^2_1, b^2_2, \ldots, b^2_q)\in \{0, 1, \ldots, r+1\}^q$.
Let $I$ be an independent set of $\cI_2$ containing $w_1$.
Since $w_2$ is not in $Q$, it can be either in another tree $Q'$ from $\mathcal{H}$, or it can be $w_2=v_j$ for some $j\in \{1, \ldots, q\}$. In the latter case, we have $b^2_j=0$, and so by the property $(\ast)$ of $\cI_1$, it holds that $w_2$ is also in $I'$.
This means that $w_1$ is not adjacent to $w_2$, as desired.
Thus, we may assume that $w_2$ is contained in some subtree $Q'$ from $\cH$ such that $Q' \neq Q$.

As $w_2$ is contained in $V(Q')\cap U_{\cB_2}$, by the property of $\cI_2$, $I$ also contains a vertex in $V(Q')\cap U_{\cB_2}$.
Since $I$ is independent, by Claim~\ref{claim:connection}, $b^1_i+b^2_i>r$ for all $i\in \{1, 2, \ldots, q\}$.
On the other hand, since $w_1$ and $w_2$ are contained in distinct subtrees of $\cH$, by Claim~\ref{claim:connection}
$w_1$ is not adjacent to $w_2$ in $G^r$, as required.
We conclude that $R_1\cup R_2$ is a rainbow independent set of size $n$.
\end{proof}

\begin{proof}[Proof of Theorem~\ref{thm:treedepth}]
Let $M$ be the function in Proposition~\ref{prop:inductivenew}.
We claim that for every positive integer $d$, we have 
$f_{\mathcal{T}^r_{d}}(n)\le M(d+1, n, d+1, r)$.
Let $G$ be a tree-depth at most $d$ and $T$ be the rooted forest whose closure contains $G$ as a subgraph.
Let $F$ be a rooted tree obtained from $T$ by adding an isolated vertex, which is a new root, and adding edges between the new vertex and original roots in the components of $T$. Then clearly, $G$ is a subgraph of $\clos(F)$ and $F\in \mathcal{F}_{d+1}$.
Let $\cI$ be a set of $M(d+1, n, d+1, r)$ independent sets of size $n$ in $G^r$.
Then by Proposition~\ref{prop:inductivenew}, $\cI$ contains a rainbow independent set of size $n$.
\end{proof}

\section{$r$-powers of graphs of bounded expansion}\label{sec:expansion}

In this section, we prove that the class of $r$-powers of graphs in a bounded expansion class has the rainbow property.
\begin{thm:main2}
Let $\mathcal{C}$ be a bounded expansion class and $r$ be a positive integer. The set $\mathcal{D}=\{G^r : G\in \mathcal{C} \}$ has the rainbow property.

\end{thm:main2}

  We can start with low tree-depth colorings of a graph $G$ in $\mathcal{C}$ whose existence is guaranteed by Theorem~\ref{thm:tdcoloring}.
  Note that the size $n$ of an independent set we are dealing is given, 
  we will take a low tree-depth coloring where the union $S$ of $n$ color classes have small tree-depth.
  But when we take the $r$-power of $G$, 
  $G^r[S]$ is not necessarily the same as $G[S]^r$, because the two vertices in $S$ can be adjacent in $G^r$ because of a path going through outside $S$.
  In fact, the similar problem happens in the result of Kwon, Pilipczuk, and Siebertz~\cite{KwonPS2020}, 
  and they resolve this problem by introducing a notion of $r$-shortest path closures.

	For a graph $G$, $X\subseteq V(G)$, and a positive integer $r$, a superset $X'$ of $X$ is called an \emph{$r$-shortest path closure} of $X$ 
	if for each $u,v\in X$ with $\dist_G(u,v)=\ell\le r$, $G[X']$ contains a path of length $\ell$ between $u$ and $v$.
	For a graph $G$, a coloring $c$ of $G$, and integers $r\ge 2$, $d\ge 1$, 
	a coloring $c'$ is a $(d,r)$-\emph{excellent refinement} of $c$ if for every vertex set $X\subseteq V(G)$,
	there exists an $r$-shortest path closure $X'$ of $X$ such that if $X$ receives at most $p$ colors in $c'$, then $X'$ receives at most $d\cdot p$ colors in $c$.

	\begin{lemma}[Kwon, Pilipczuk, and Siebertz~\cite{KwonPS2020}]\label{lem:excellent}
	Let $G$ be a graph, let $k\ge 1$, $r\ge 2$ be integers, and let $d_r:=\prod_{2\le \ell\le r} 2\wcol_\ell (G)$.
	Then every coloring $c$ of $G$ using at most $k$ colors has a $(d_r, r)$-excellent refinement coloring using at most $k^{d_r}$ colors.
	\end{lemma}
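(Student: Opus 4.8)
The plan is to prove the lemma by induction on $r$, reducing the radius-$r$ problem to the radius-$(r-1)$ problem by means of an order realizing $\wcol_r(G)$. The one combinatorial fact that drives the whole argument is a splitting property of shortest paths: if $P=x_0x_1\cdots x_\ell$ is a shortest path of length $\ell\le r$ and $m=x_j$ is the minimum vertex of $P$ with respect to a fixed linear order $L$, then $m$ is also the minimum on the prefix $x_0\cdots x_j$ (of length $j\le\ell$) and on the suffix $x_j\cdots x_\ell$ (of length $\ell-j\le\ell$), so $m\in\WReach_\ell[G,L,x_0]\cap\WReach_\ell[G,L,x_\ell]$, and $m$ breaks $P$ into two shortest subpaths each of length strictly less than $\ell$. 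This lets me split every distance-$r$ pair into two pairs of distance at most $r-1$ while keeping the splitting vertex weakly $r$-reachable from an endpoint.

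For the base case $r=1$ I take $c'=c$ and $d_1=1$ (the empty product): every pair of $X$ at distance $1$ is already an edge, so $X':=X$ is a $1$-shortest path closure carrying at most $1\cdot p$ colours of $c$. For the inductive step I fix a linear order $L$ with $\bigl|\WReach_r[G,L,v]\bigr|\le\wcol_r(G)$ for all $v$, and let $c_{r-1}$ be a $(d_{r-1},r-1)$-excellent refinement of $c$ using at most $k^{d_{r-1}}$ colours, which exists by induction. I define the refinement $c_r$ by recording, for each vertex $v$, its old colour together with the $c_{r-1}$-colours of its weakly $r$-reachable set:
\[ c_r(v):=\bigl(\,c_{r-1}(v),\ (c_{r-1}(w))_{w\in\WReach_r[G,L,v]}\,\bigr), \]
where the second coordinate is listed according to $L$ and padded to length $\wcol_r(G)$. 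Each coordinate ranges over the at most $k^{d_{r-1}}$ colours of $c_{r-1}$ and there are $1+\wcol_r(G)$ coordinates, so $c_r$ uses at most $\bigl(k^{d_{r-1}}\bigr)^{1+\wcol_r(G)}=k^{d_{r-1}(1+\wcol_r(G))}\le k^{d_{r-1}\cdot 2\wcol_r(G)}=k^{d_r}$ colours, as required. Note that $c_r$ refines $c_{r-1}$, hence refines $c$.

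To verify excellence of $c_r$, given $X$ I first form $M$ by adding, for every pair $u,v\in X$ with $\dist_G(u,v)=r$, the $L$-minimum vertex $m$ of a fixed shortest $u$-$v$ path; by the splitting fact each such $m$ lies in $\WReach_r[G,L,u]$ and breaks the pair into two pairs at distance at most $r-1$. Applying the induction hypothesis to $Y:=X\cup M$ yields an $(r-1)$-shortest path closure $Y'$ of $Y$ with at most $d_{r-1}\cdot p'$ colours of $c$, where $p'$ is the number of $c_{r-1}$-colours on $Y$; I set $X':=Y'$. Distance-at-most-$(r-1)$ pairs of $X$ are connected inside $Y'$ directly, while for a distance-$r$ pair the two length-at-most-$(r-1)$ subpaths through $m$ give a walk of length $r$ in $G[X']$; since $G[X']\subseteq G$ forces $\dist_{G[X']}(u,v)\ge\dist_G(u,v)=r$, this walk certifies a path of length exactly $r$, so $X'$ is an $r$-shortest path closure of $X$. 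For the colour count, write $p$ for the number of $c_r$-colours on $X$. Every $m\in M$ lies in $\WReach_r[G,L,u]$ for some $u\in X$, so $c_{r-1}(m)$ is one of the at most $\wcol_r(G)$ values read off from the second coordinate of $c_r(u)$; thus $M$ carries at most $\wcol_r(G)\cdot p$ colours of $c_{r-1}$, and since $c_r$ refines $c_{r-1}$ the set $X$ carries at most $p$ colours of $c_{r-1}$. Hence $p'\le(1+\wcol_r(G))\,p$ and $X'$ carries at most $d_{r-1}(1+\wcol_r(G))\,p\le d_r\cdot p$ colours of $c$, completing the induction.

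The step I expect to be the main obstacle is the telescoping colour bound across the two colourings $c_{r-1}$ and $c_r$: the whole reduction works only because $c_r(u)$ encodes the $c_{r-1}$-colours of the \emph{entire} weakly $r$-reachable set of $u$, so that the $r-1$ leftover midpoints in $M$ contribute at most $\wcol_r(G)$ new $c_{r-1}$-colours per $c_r$-colour of $X$; it is precisely this that converts the factor $1+\wcol_r(G)\le 2\wcol_r(G)$ into the advertised $d_r=\prod_{2\le\ell\le r}2\wcol_\ell(G)$, and getting the two colourings aligned in the accounting is where care is needed. A secondary point, handled above via the inequality $\dist_{G[X']}\ge\dist_G$, is that the two inductively supplied subpaths need only form a walk of length $r$ rather than a genuine path; everything else, namely the palette estimate and the existence of an order realizing $\wcol_r(G)$, is routine from the definition of the weak $r$-colouring number.
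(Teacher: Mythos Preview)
The paper does not prove this lemma; it is quoted from Kwon--Pilipczuk--Siebertz without proof, so there is no argument in the paper to compare yours against. That said, your attempted proof has a genuine gap.

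The problem is the splitting step. You assert that for $u,v\in X$ with $\dist_G(u,v)=r$ the $L$-minimum $m$ of a shortest $u$--$v$ path ``breaks the pair into two pairs at distance at most $r-1$''. This fails whenever $m$ is an endpoint: if $m=u$ then $\dist_G(m,v)=r$, so the pair $(m,v)$ is \emph{not} handled by the $(r-1)$-closure of $Y$, and your $X'=Y'$ need not contain any $u$--$v$ path of length $r$. Concretely, take $G=P_5=x_0x_1x_2x_3x_4$ and $r=2$. The order $x_1<_L x_3<_L x_0<_L x_2<_L x_4$ realises $\wcol_2(P_5)=3$. With $X=\{x_1,x_3\}$ the unique shortest path is $x_1x_2x_3$, whose $L$-minimum is the endpoint $x_1$; hence $M$ adds nothing, $Y=X$, and $X'=Y'=\{x_1,x_3\}$ contains no path of length $2$ between $x_1$ and $x_3$. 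Nor can a cleverer choice of $L$ save the argument: demanding that the $L$-minimum of each of $x_0x_1x_2$, $x_1x_2x_3$, $x_2x_3x_4$ be its middle vertex forces $x_1<_L x_2$ and $x_2<_L x_1$ simultaneously.

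The obvious repairs do not work either. Taking the $L$-minimum among the \emph{internal} vertices yields an interior point, but then that point need not lie in $\WReach_r[G,L,u]\cup\WReach_r[G,L,v]$ (in the $P_5$ example $\WReach_2[G,L,x_1]\cup\WReach_2[G,L,x_3]=\{x_1,x_3\}$, which misses $x_2$), so the colour bookkeeping breaks. The Kwon--Pilipczuk--Siebertz construction is more delicate than ``record the $c_{r-1}$-colours of $\WReach_r$ and peel off one midpoint''; getting a midpoint that is simultaneously strictly interior \emph{and} has its $c_{r-1}$-colour readable from some endpoint's $c_r$-colour is precisely the missing idea, and it requires reworking what the refinement $c_r$ actually encodes.
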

	
	\begin{proof}[Proof of Theorem~\ref{thm:main2}]
	Let $n$ be a positive integer. We have to show that there exists $N$ such that 
	for every graph $G$ in $\mathcal{C}$, $f_{G^r}(n)\le N$. 
		
	By Theorem~\ref{thm:wcol}, for each $\ell$, $\wcol_{\ell}(G)$ is bounded by a constant, say $W_{\ell}$, only depending on $\cC$.
	Also, by Theorem~\ref{thm:tdcoloring}, 
	there exists a function $g:\mathbb{N}\rightarrow\mathbb{N}$
  such that for all $p\in \mathbb{N}$, every graph $G\in \cC$ can be vertex
  colored with at most $g(p)$ colors such that the union of any
  $i\leq p$ color classes induces a subgraph of tree-depth at most
  $i-1$.
  	Let  $d_r:=\prod_{2\le \ell\le r} 2W_{\ell}$, and
	$L:= f_{\mathcal{T}^r_{d_r \cdot n}}(n)$ which is finite by Theorem~\ref{thm:treedepth}.
	Finally, we set $N:=g(d_r\cdot n)^{d_rn}\cdot L$.

	Let $G\in \cC$, and 
	$\cI$ be a set of independent sets of $G^r$ of size $n$ such that $\abs{\mathcal{I}}\ge N$.
	
	Let $c$ be a $(d_r\cdot n)$-tree-depth coloring with $g(d_r \cdot n)$ colors.
	We take a $(d_r, r)$-excellent refinement $c'$ of $c$ with at most $g(d_r\cdot n)^{d_r}$ colors by Lemma~\ref{lem:excellent}.
	
	Since $\abs{\mathcal{I}}\ge g(d_r\cdot n)^{d_rn}\cdot L$, 
	there exist $n$ color classes $X_1, X_2, \ldots, X_n$ of $c'$ and a subset $\mathcal{I}'\subseteq \mathcal{I}$ such that 
	$\abs{\mathcal{I'}}\ge L$ and every independent set of $\mathcal{I}'$ is contained in $X_1\cup X_2\cup \cdots \cup X_n$. 
	As $c'$ is a $(d_r, r)$-excellent refinement of $c$, 
	there exists an $r$-shortest path closure $X'$ of $X_1\cup X_2\cup\cdots \cup X_n$ that uses at most $d_r\cdot n$ colors of $c$.
	Thus, the graph induced by $X_1\cup X_2\cup \cdots \cup X_n$ in $G^r$ is the same as the graph 
	induced by the same set in $G[X']^r$. 
	Since $G[X']$ has tree-depth at most $d_r\cdot n$ and $\abs{\mathcal{I'}}\ge L=f_{\mathcal{T}^r_{d_r \cdot n}}(n)$,
	by Theorem~\ref{thm:treedepth}, $\mathcal{I}'$ contains a rainbow independent set.
	\end{proof}
	
	We discuss some applications of Theorem~\ref{thm:main2}.
	We denote by $\overline{nK_2}$ the complement of the $n$ disjoint union of $K_2$, which is the complete multipartite graph where it has $n$ parts and each part has size $2$.
	It is not difficult to see that $\{\overline{nK_2}: n\in \mathbb{N}\}$ has infinite $f_{\cC}(2)$.
	Thus, we can deduce the following.
\begin{corollary}
Let $\mathcal{C}$ be a bounded expansion class and $r$ be a positive integer. Then there exists a positive integer $n$ such that every graph in $\{G^r : G\in \mathcal{C} \}$ does not contain an induced subgraph isomorphic to $\overline{nK_2}$.
\end{corollary}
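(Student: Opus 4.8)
The plan is to combine the rainbow property of $\mathcal{D}$ granted by Theorem~\ref{thm:main2} with the simple structural obstruction already highlighted for the graphs $\overline{nK_2}$. First I would make precise the combinatorial fact behind the remark that $\{\overline{nK_2}:n\in\mathbb{N}\}$ has infinite $f_{\cC}(2)$. In $\overline{nK_2}$, the $n$ parts $P_1,\dots,P_n$, each of size $2$, form a family of $n$ independent sets of size $2$; yet any two vertices lying in \emph{distinct} parts are adjacent, since $\overline{nK_2}$ is complete multipartite. Hence a rainbow independent set of size $2$ for $\{P_1,\dots,P_n\}$ would have to consist of one vertex from $P_i$ and one from $P_j$ with $i\neq j$, and such a pair is never independent. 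So this family admits no rainbow independent set of size $2$, which shows $f_{\overline{nK_2}}(2)>n$.

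Next I would invoke Theorem~\ref{thm:main2} to conclude that $\mathcal{D}$ has the rainbow property, so in particular $N:=f_{\mathcal{D}}(2)<\infty$, and I claim that $n=N$ witnesses the corollary. Suppose toward a contradiction that some $G\in\mathcal{D}$ contains an induced subgraph isomorphic to $\overline{NK_2}$, with parts $P_1,\dots,P_N$. Since both independence and adjacency are inherited from an induced subgraph, each $P_i$ remains an independent set of size $2$ in $G$, and any two vertices taken from distinct parts $P_i,P_j$ remain adjacent in $G$. By exactly the argument of the previous paragraph, $\{P_1,\dots,P_N\}$ is then a family of $N$ independent sets of size $2$ in $G$ admitting no rainbow independent set of size $2$, so $f_G(2)>N$.

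This contradicts $f_G(2)\le \sup_{H\in\mathcal{D}}f_H(2)=f_{\mathcal{D}}(2)=N$. Therefore no graph of $\mathcal{D}$ contains an induced copy of $\overline{NK_2}$, and $n=N=f_{\mathcal{D}}(2)$ establishes the statement. I do not expect any genuinely hard step here, as all the real content is packaged in Theorem~\ref{thm:main2}; the only points that require care are the bookkeeping that matches ``the family has no rainbow set'' with the definition of $f_G(2)$ (namely that exhibiting a single family of $N$ independent sets of size $2$ without a rainbow set forces $f_G(2)>N$), and the observation that passing to an induced subgraph preserves both the independence of each part and the adjacency between vertices of different parts, so that the obstruction in $\overline{NK_2}$ survives inside $G$.
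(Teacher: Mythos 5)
Your proposal is correct and follows essentially the same route as the paper: the paper deduces the corollary from the observation that the parts of $\overline{nK_2}$ form a family of $n$ independent sets of size $2$ with no rainbow independent set (so $f_{\overline{nK_2}}(2)>n$), combined with the finiteness of $f_{\mathcal{D}}(2)$ from Theorem~\ref{thm:main2}. You have merely filled in the bookkeeping (the injection forces the two vertices into distinct parts, induced subgraphs preserve the obstruction, and one bad family of size $N$ forces $f_G(2)>N$) that the paper leaves as ``it is not difficult to see.''
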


	 As we discussed in the introduction, 
	map graphs can be obtained as  induced subgraphs of $2$-powers of planar graphs. Thus, by Theorem~\ref{thm:main2}, 
	the class of map graphs has the rainbow property.
	\begin{corollary}
	The class of map graphs has the rainbow property.
	\end{corollary}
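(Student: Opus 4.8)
The plan is to combine three ingredients: the observation of Chen, Grigni, and Papadimitriou~\cite{ChenGP02} that every map graph is an induced subgraph of the $2$-power of a planar graph, the fact that the class of planar graphs has bounded expansion, and Theorem~\ref{thm:main2}. The genuine gluing step is a short remark that the rainbow property is inherited by any class whose members embed as induced subgraphs of a class that already has the property.

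First I would record that the class $\mathcal{P}$ of planar graphs is a bounded expansion class. By Wagner's theorem planar graphs are precisely the graphs with no $K_5$ and no $K_{3,3}$ minor, so $\mathcal{P}$ is contained in the class of $K_5$-minor-free graphs, which has bounded expansion; since any subclass of a bounded expansion class again has bounded expansion (shallow minors of a subclass member are shallow minors of a class member, so the same density function works), $\mathcal{P}$ has bounded expansion. Applying Theorem~\ref{thm:main2} with $r=2$ then gives that $\mathcal{D}:=\{G^2 : G\in \mathcal{P}\}$ has the rainbow property, and I would fix the uniform bound $N:=f_{\mathcal{D}}(n)<\infty$ it supplies for the given $n$.

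Next I would carry out the induced-subgraph inheritance. Let $H$ be an arbitrary map graph and, using~\cite{ChenGP02}, fix a planar graph $G$ with $H=G^2[V(H)]$. Given any family $\mathcal{I}$ of $N$ independent sets of size $n$ in $H$, each member of $\mathcal{I}$ is also independent in $G^2$, since independence of a vertex set depends only on the edges induced on it and $H$ is an induced subgraph of $G^2$. Hence $\mathcal{I}$ is a family of $N$ independent sets of size $n$ in $G^2\in\mathcal{D}$, so by the choice of $N$ it admits a rainbow independent set $I$ of size $n$, witnessed by an injection $\phi\colon I\to\mathcal{I}$. Because $I\subseteq\bigcup\mathcal{I}\subseteq V(H)$ and $I$ is independent in $G^2$, the set $I$ is independent in the induced subgraph $H$ as well, and $\phi$ still witnesses it as rainbow. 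Thus $I$ is a rainbow independent set of size $n$ in $H$, giving $f_H(n)\le N$.

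Taking the supremum over all map graphs $H$ would yield $f_{\mathcal{M}}(n)\le N<\infty$ for the class $\mathcal{M}$ of map graphs and every $n$, which is exactly the rainbow property. I expect no serious obstacle: the only point needing care is the direction of the inheritance argument, namely checking that the rainbow set produced in the larger graph $G^2$ lands inside $V(H)$ and remains independent there, and this is immediate from the definition of an induced subgraph. All substantive content is carried by Theorem~\ref{thm:main2} together with the embedding of~\cite{ChenGP02}.
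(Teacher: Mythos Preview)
Your proposal is correct and matches the paper's approach: the paper states this corollary without a separate proof, relying on the same two ingredients you invoke (Chen--Grigni--Papadimitriou's embedding of map graphs as induced subgraphs of $2$-powers of planar graphs, together with Theorem~\ref{thm:main2} applied to the bounded expansion class of planar graphs). You have simply made explicit the induced-subgraph inheritance step that the paper leaves implicit.
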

	
	We may also observe that induced matchings in a bounded expansion class have the rainbow property.
	A matching $M$ is \emph{induced} if for distinct edges $a_1b_1, a_2b_2\in M$, 
	there are no edges between $\{a_1, b_1\}$ and $\{a_2, b_2\}$.
	
	\begin{corollary}
	Every bounded expansion class satisfies the rainbow property for induced matchings.
	\end{corollary}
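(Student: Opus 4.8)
The plan is to reduce the statement to Theorem~\ref{thm:main2} via the classical correspondence between induced matchings and independent sets in a power of a sparse graph.

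First I would set up the correspondence. For a graph $G$, let $L(G)$ be its line graph, whose vertices are the edges of $G$ and in which two edges are adjacent if and only if they share an endpoint. Then two edges $e_1,e_2$ of $G$ are at distance at most $2$ in $L(G)$ exactly when they share an endpoint, or some endpoint of $e_1$ is adjacent to some endpoint of $e_2$ in $G$. Hence a set of edges of $G$ is an induced matching if and only if it is an independent set of $L(G)^2$, and this identification preserves sizes and carries a family of induced matchings to a family of independent sets. Consequently a rainbow independent set of size $n$ in $L(G)^2$ is precisely a rainbow induced matching of size $n$ in $G$, so $\cC$ has the rainbow property for induced matchings if and only if the class $\{L(G)^2 : G\in \cC\}$ has the rainbow property for independent sets.

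Next I would realize $L(G)^2$ inside a fourth power. Let $G'$ be the \emph{$1$-subdivision} of $G$, obtained by replacing every edge $e=uv$ by a path $u\,w_e\,v$ through a new vertex $w_e$, and put $W=\{w_e : e\in E(G)\}$. Since each $w_e$ has exactly the two neighbours that are the endpoints of $e$, and since $\dist_{G'}=2\dist_G$ between original vertices, a short computation gives $\dist_{G'}(w_{e_1},w_{e_2})=2$ when $e_1,e_2$ share an endpoint, $\dist_{G'}(w_{e_1},w_{e_2})=4$ when they are disjoint but joined by an edge of $G$, and $\dist_{G'}(w_{e_1},w_{e_2})\ge 6$ otherwise. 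Therefore $w_{e_1}$ and $w_{e_2}$ are adjacent in $(G')^4$ if and only if $e_1,e_2$ are adjacent in $L(G)^2$, so under $e\mapsto w_e$ we have $L(G)^2=(G')^4[W]$, an induced subgraph of $(G')^4$.

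Finally I would invoke Theorem~\ref{thm:main2}. The class $\cC'=\{G' : G\in\cC\}$ of $1$-subdivisions again has bounded expansion, so $\{H^4 : H\in\cC'\}$ has the rainbow property by Theorem~\ref{thm:main2}. The rainbow property is monotone under induced subgraphs: if $H=G[S]$, then every independent set of $H$ is an independent set of $G$, and any rainbow independent set for a family of subsets of $V(H)$ is contained in $V(H)$, whence $f_H(n)\le f_G(n)$. Since each $L(G)^2$ is an induced subgraph of $(G')^4$, the class $\{L(G)^2 : G\in\cC\}$ inherits the rainbow property, and by the first paragraph $\cC$ has the rainbow property for induced matchings.

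The step I expect to be the main obstacle is the claim that $\cC'$ has bounded expansion. This is a standard fact about the behaviour of shallow minors under subdivision: a $t$-shallow minor of $G'$ projects to a shallow minor of $G$ of comparable depth, because contracting each degree-two vertex $w_e$ along the edge $e$ roughly halves radii, so the grad of $G'$ is controlled by the grads of $G$. The only delicate point is that naively projecting a branch set of $G'$ to $G$ can create overlaps at an endpoint claimed by some $w_e$; resolving this by a careful reassignment of the subdivision vertices, together with the fact that $G$ is already sparse, is routine and is covered by the standard treatment of subdivisions in \cite{NO2012}.
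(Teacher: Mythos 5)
Your proposal is correct and takes essentially the same route as the paper: both pass to the $1$-subdivision (whose bounded expansion is treated as standard), turn induced matchings into independent sets among the subdivision vertices, and invoke Theorem~\ref{thm:main2} on a fixed power of the subdivided class. The only cosmetic differences are that the paper works with the $5$-power and the observation that subdivision vertices coming from an induced matching are at pairwise distance at least $6$, while you work with the $4$-power via the identification $L(G)^2\cong (G')^4[W]$ plus the (easy, correctly justified) monotonicity of $f_G(n)$ under induced subgraphs.
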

	\begin{proof}
	Let $\cC$ be a bounded expansion class, $n$ be an integer.
	We consider the class $\mathcal{D}$  which consists of $1$-subdivisions of all graphs in $\cC$.
	It is well known that $\mathcal{D}$ also has bounded expansion.
	Take $M:=f_{\mathcal{D}^5}(n)$.
	
	Let $G$ be a graph in $\cC$, and 
	$\cI$ be a family of $M$ induced matchings of size $n$ in $G$.
	Let $H$ be the $1$-subdivision of $G$. 
	For each induced matching $I \in \cI$ and for each $e$ of $G$ in $I$, we consider the corresponding subdivision vertex in $H$ and obtain an independent set in $H$. 
	Let $\cI^*$ be the family of resulting independent sets.
	
	We observe that for $I\in \cI^*$ and two distinct vertices $v,w\in I$, $\dist_H(v,w)\ge 6$ as they came from an induced matching of $G$.
	It means that $\cI^*$ is a family of independent sets in the $5$-power $H^5$ of $H$ as well.
	As $\abs{ \cI^* }\ge M=f_{\mathcal{D}^5}(n)$, $\cI^*$ contains a rainbow independent set $X$ of size $n$ in $H^5$.
	Now, we obtain an edge set $Y$ in $G$ by taking the original edge from the subdivided vertices contained in $X$.
	Since $X$ is an independent set of $H^5$, 
	$Y$ is again an induced matching of $G$.
	\end{proof}

\paragraph*{Acknowledgement.}
The authors would like to thank anonymous reviewers for their helpful suggestions.      
The authors also thank Joseph Briggs for his comment on the argument about graphs of bounded neighborhood diversity.


\begin{thebibliography}{10}

\bibitem{RonE2009}
R.~Aharoni and E.~Berger.
\newblock Rainbow matchings in {$r$}-partite {$r$}-graphs.
\newblock {\em Electron. J. Combin.}, 16(1):Research Paper 119, 9, 2009.

\bibitem{RonEMDP2019}
R.~Aharoni, E.~Berger, M.~Chudnovsky, D.~Howard, and P.~Seymour.
\newblock Large rainbow matchings in general graphs.
\newblock {\em European J. Combin.}, 79:222--227, 2019.

\bibitem{ABKK2019}
R.~Aharoni, J.~Briggs, J.~Kim, and M.~Kim.
\newblock Rainbow independent sets in certain classes of graphs.
\newblock arXiv:1909.13143, 2019.

\bibitem{AharoniBKK2021}
R.~Aharoni, J.~Briggs, J.~Kim, and M.~Kim.
\newblock Badges and rainbow matchings.
\newblock {\em Discrete Math.}, 344(6):112363, 2021.

\bibitem{Brianski2021}
M.~Bria\'{n}ski, P.~Micek, M.~Pilipczuk, and M.~T. Seweryn.
\newblock Erd\"{o}s--{H}ajnal properties for powers of sparse graphs.
\newblock {\em SIAM J. Discrete Math.}, 35(1):447--464, 2021.

\bibitem{ChenGP02}
Z.~Chen, M.~Grigni, and C.~H. Papadimitriou.
\newblock Map graphs.
\newblock {\em J. {ACM}}, 49(2):127--138, 2002.

\bibitem{CO2000}
B.~Courcelle and S.~Olariu.
\newblock Upper bounds to the clique width of graphs.
\newblock {\em Discrete Appl. Math.}, 101(1-3):77--114, 2000.

\bibitem{Drisko1998}
A.~A. Drisko.
\newblock Transversals in row-{L}atin rectangles.
\newblock {\em J. Combin. Theory Ser. A}, 84(2):181--195, 1998.

\bibitem{Gajarsky2020}
J.~Gajarsk\'{y}, S.~Kreutzer, J.~Ne\v{s}et\v{r}il, P.~Ossona~de Mendez,
  M.~Pilipczuk, S.~Siebertz, and S.~Toru\'{n}czyk.
\newblock First-order interpretations of bounded expansion classes.
\newblock {\em ACM Trans. Comput. Log.}, 21(4):Art. 29, 41, 2020.

\bibitem{GajarskyLO2013}
J.~Gajarsk\'{y}, M.~Lampis, and S.~Ordyniak.
\newblock Parameterized algorithms for modular-width.
\newblock In {\em Parameterized and exact computation}, volume 8246 of {\em
  Lecture Notes in Comput. Sci.}, pages 163--176. Springer, Cham, 2013.

\bibitem{GHNOO2017}
R.~Ganian, P.~Hlin{\v{e}}n{{\'y}}, J.~Ne{\v{s}}et{\v{r}}il,
  J.~Obdr\v{z}\'{a}lek, and P.~Ossona~de Mendez.
\newblock Shrub-depth: Capturing height of dense graphs.
\newblock {\em Log. Methods Comput. Sci.}, 15(1):7:1--7:25, 2019.

\bibitem{KimL2019}
M.~Kim and A.~Lew.
\newblock Complexes of graphs with bounded independence number.
\newblock {\em Israel J. Mathematics}, accepted.
\newblock arXiv:1912.12605.

\bibitem{KwonPS2020}
O.~Kwon, M.~Pilipczuk, and S.~Siebertz.
\newblock On low rank-width colorings.
\newblock {\em European J. Combin.}, 83:103002, 17, 2020.

\bibitem{Lampis2012}
M.~Lampis.
\newblock Algorithmic meta-theorems for restrictions of treewidth.
\newblock {\em Algorithmica}, 64(1):19--37, 2012.

\bibitem{NO2008a}
J.~Ne{\v{s}}et{\v{r}}il and P.~Ossona~de Mendez.
\newblock Grad and classes with bounded expansion. {I}. {D}ecompositions.
\newblock {\em European J. Combin.}, 29(3):760--776, 2008.

\bibitem{NO2011}
J.~Ne{\v{s}}et{\v{r}}il and P.~Ossona~de Mendez.
\newblock On nowhere dense graphs.
\newblock {\em European J. Combin.}, 32(4):600--617, 2011.

\bibitem{NO2012}
J.~Ne{\v{s}}et{\v{r}}il and P.~Ossona~de Mendez.
\newblock {\em Sparsity}, volume~28 of {\em Algorithms and Combinatorics}.
\newblock Springer, Heidelberg, 2012.

\bibitem{Nesetril2020}
J.~Ne\v{s}et\v{r}il, P.~Ossona~de Mendez, M.~Pilipczuk, and X.~Zhu.
\newblock Clustering powers of sparse graphs.
\newblock {\em Electron. J. Combin.}, 27(4):P4.17, 2020.

\bibitem{NO2012c}
J.~Ne\v{s}et\v{r}il, P.~Ossona~de Mendez, and D.~R. Wood.
\newblock Characterisations and examples of graph classes with bounded
  expansion.
\newblock {\em European J. Combin.}, 33(3):350--373, 2012.

\bibitem{Oum2004}
S.~Oum.
\newblock Rank-width and vertex-minors.
\newblock {\em J. Combin. Theory Ser. B}, 95(1):79--100, 2005.

\bibitem{Zhu2009}
X.~Zhu.
\newblock Colouring graphs with bounded generalized colouring number.
\newblock {\em Discrete Math.}, 309(18):5562--5568, 2009.

\end{thebibliography}
\end{document}